\newtheorem{Proposition}{Proposition}
\numberwithin{Theorem}{section}
\numberwithin{Definition}{section}
\numberwithin{Lemma}{section}
\numberwithin{Algorithm}{section}
\numberwithin{equation}{section}
\newtheorem{theorem}{Theorem}[section]
\newtheorem{lemma}[theorem]{Lemma} 
\newtheorem{assumption}{Assumption}
\newtheorem{remark}{Remark}
\newcommand\scalemath[2]{\scalebox{#1}{\mbox{\ensuremath{\displaystyle #2}}}}
\def\@cline#1-#2\@nil{%
  \omit
  \@multicnt#1%
  \advance\@multispan\m@ne
  \ifnum\@multicnt=\@ne\@firstofone{&\omit}\fi
  \@multicnt#2%
  \advance\@multicnt-#1%
  \advance\@multispan\@ne
  \leaders\hrule\@height\arrayrulewidth\hfill
  \cr
  \noalign{\nobreak\vskip-\arrayrulewidth}}
\begin{document}
	\title{An active-set method for sparse approximations\\
	Part I: Separable $\ell_1$ terms}
\author{Spyridon Pougkakiotis \and Jacek Gondzio \and Dionysios S. Kalogerias}

\maketitle

\begin{abstract}
\par In this paper we present an active-set method for the solution of $\ell_1$-regularized convex quadratic optimization problems. It is derived by combining a proximal method of multipliers (PMM) strategy with a standard semismooth Newton method (SSN). The resulting linear systems are solved using a Krylov-subspace method, accelerated by certain general-purpose preconditioners which are shown to be optimal with respect to the proximal parameters. Practical efficiency is further improved by warm-starting the algorithm using a proximal alternating direction method of multipliers. We show that the outer PMM achieves global convergence under mere feasibility assumptions. Under additional standard assumptions, the PMM scheme achieves global linear and local superlinear convergence. The SSN scheme is locally superlinearly convergent, assuming that its associated linear systems are solved accurately enough, and globally convergent under certain additional regularity assumptions. We provide numerical evidence to demonstrate the effectiveness of the approach by comparing it against OSQP and IP-PMM (an ADMM and a regularized IPM solver, respectively) on several elastic-net linear regression and $L^1$-regularized PDE-constrained optimization problems. 
\end{abstract}

\section{Introduction}
\par In this paper we consider convex optimization problems of the following form:
\begin{equation} \label{primal problem} \tag{P}
\underset{x \in \mathbb{R}^n}{\text{min}} \  c^\top x + \frac{1}{2} x^\top Q x + g(x) + \delta_{\mathcal{K}}(x), \qquad \text{s.t.}  \  Ax = b,
\end{equation}
\noindent where $c \in \mathbb{R}^n$, $Q \succeq 0 \in \mathbb{R}^{n\times n}$, $A \in \mathbb{R}^{m \times n}$, $b \in \mathbb{R}^m$, and $g(x) = \|Dx\|_1$, with $D \succeq 0$ and diagonal. Without loss of generality, we assume that $m \leq n$. Furthermore, $\mathcal{K} \coloneqq \{x \in \mathbb{R}^n : a_l \leq x \leq a_u\}$, for some arbitrary (possibly unbounded) vectors $a_l \leq a_u$. Finally, $\delta_{\mathcal{K}}(\cdot)$ is an indicator function for the set $\mathcal{K}$, with $\delta_{\mathcal{K}}^*(\cdot)$ denoting its Fenchel conjugate, that is:
\begin{equation*} 
\delta_{\mathcal{K}}(x) = 
     \begin{cases}
       0, &\quad\text{if } x \in \mathcal{K} \\
       \infty, &\quad\text{otherwise} \\ 
     \end{cases}, \qquad \delta_{\mathcal{K}}^*(x^*) = \underset{x \in \mathbb{R}^n}{\sup} \big\{(x^*)^\top x - \delta_{\mathcal{K}}(x) \big\}.
\end{equation*}
\begin{remark}
We note that problem \textnormal{\eqref{primal problem}} can accommodate instances where sparsity is sought in some appropriate dictionary (i.e. in that case $D$ would be a general rectangular matrix). Indeed, this can be done by appending some additional linear equality constraints in \textnormal{\eqref{primal problem}}, making the $\ell_1$ regularization separable (e.g. see \textnormal{\cite[Sections 3--5]{SIREV:DeSimone_etal}}). However, this would not be a numerically efficient strategy since it would result in additional linear constraints, and the structure of such $\ell_1$ terms would not be fully exploited. Hence, this case is treated separately in an accompanying paper, and is omitted in this work.
\end{remark}
\par Using Fenchel duality, we can easily verify (see Appendix \ref{Appendix: derivation of dual}) that the dual of \eqref{primal problem} is
\begin{equation} \label{dual problem} \tag{D}
\underset{x \in \mathbb{R}^n, y \in \mathbb{R}^m, z \in \mathbb{R}^n}{\text{max}} \  b^\top y - \frac{1}{2} x^\top Q x - \delta^*_{\mathcal{K}}(z) -g^*(A^\top y - c - Qx -z).
\end{equation}
\noindent Throughout the paper we make use of the following blanket assumption.
\begin{assumption} \label{assumption: solution of the QP}
Problems \eqref{primal problem} and \eqref{dual problem} are both feasible.
\end{assumption}
\noindent If $D = 0$, from \cite[Proposition 2.3.4]{BertsekasNedicOzdaglar} we know that Assumption \ref{assumption: solution of the QP} implies that there exists a primal-dual triple $(x^*,y^*,z^*)$ solving \eqref{primal problem}--\eqref{dual problem}. If the primal-dual pair \eqref{primal problem}--\eqref{dual problem} is feasible, it must remain feasible for any $D \succeq 0$, since in this case \eqref{primal problem} can be written as a convex quadratic problem by appending appropriate (necessarily feasible) linear equality and inequality constraints. Thus, Assumption \ref{assumption: solution of the QP} suffices to guarantee that the solution set of \eqref{primal problem}--\eqref{dual problem} is non-empty.
\par There are numerous applications that require the solution of problems of the form of \eqref{primal problem}.  Indeed, \eqref{primal problem} can model linear and convex quadratic programming instances, regularized lasso instances (often arising in signal or image processing and machine learning, e.g. see \cite{SIREV:Chenetal,bookVapnik}), as well as sub-problems arising from the linearization of a nonlinear (possibly nonconvex or nonsmooth) problem (such as those arising within sequential quadratic programming \cite{JCAM:BoggsTolleSQP} or globalized proximal Newton methods \cite{SIAMOPT:Leeetal}). Furthermore, various optimal control problems can be tackled in the form of \eqref{primal problem}, such as those arising from $L^1$-regularized partial differential equation (PDE) optimization, assuming that a \emph{discretize-then-optimize} strategy is adopted (e.g. see \cite{ESAIM:GerdDaniel}). Given the diversity of applications, most of which require a highly-accurate solution, the construction of efficient, scalable, and robust solvers for \eqref{primal problem} has attracted a lot of attention.
\par In particular, there is a plethora of first-order methods capable of finding an approximate solution to \eqref{primal problem}. For example, one could employ proximal (sub-)gradient (e.g. see \cite{SIAM:Beck}) or splitting schemes (e.g. see \cite{SciComp:DengYin}). While such solution methods are very general, easy to implement, and require very little memory, they are usually able to find only an approximate solution, not exceeding 2- or 3-digits of accuracy. If a more accurate solution is needed, then one has to resort to an approach that utilizes second-order information. 
\par There are three major classes of second-order methods for problems of the form of \eqref{primal problem}. Those include globalized (smooth, semismooth, quasi or proximal) Newton methods (e.g. see \cite{MathOR:Han,CAM:MartQi,COAP:Stella_etal}), variants of the proximal point method (e.g. see \cite{COAP:Marchi,IEEE_CDC:Dhingra_etal,MathProgComp:Hermans_etal,SIAMOPT:Leeetal,SIAMOpt:Lietal}), or interior point methods (IPMs) applied to a reformulation of \eqref{primal problem} (e.g. see \cite{SIREV:DeSimone_etal,IPMs:FountoulakisEtAl2013,arXiv:GondPougkPears,NLAA:PearsonPorcStoll}). \par Most globalized Newton-like approaches or proximal point variants studied in the literature are developed for composite programming problems in which either $g(x) = 0$ (e.g. see \cite{NLAA:ChenQi,COAP:Marchi,COAP:GillRobi,MathProg:Itoetal,SIAMOpt:Lietal}) or $\mathcal{K} = \mathbb{R}^n$ (e.g. see \cite{IEEE_CDC:Dhingra_etal,InverseProbs:HansRaasch,SIAMOpt:Lietal2}). More recently there have been developed certain globalized Newton-like schemes, specialized to the case of $L^1$-regularized PDE-constrained optimization (see \cite{OptEng:MannelRund,CAA:Porcellietal}), in which the $\ell_1$ term as well as the box constraints in \eqref{primal problem} are both explicitly handled. We should notice, however, that globalized Newton-like schemes applied to \eqref{primal problem} need additional assumptions on the matrix $Q$, as well as the constraint matrix $A$, since otherwise, the stability of the related Newton linear systems, arising as sub-problems, might be compromised. Under certain assumptions, superlinear convergence of Newton-like schemes is observed ``close to a solution". Under additional assumptions, global convergence can be achieved via appropriate line-search or trust-region strategies (e.g. see the developments in \cite{SIAMOpt:Christofetal,MathOR:Han,MathProg:Itoetal,SIAMOpt:Themelis_etal} and the references therein). 
\par Interior point methods can readily solve problems of the form of \eqref{primal problem} in a polynomial number of steps (\cite{SIREV:DeSimone_etal,IPMs:FountoulakisEtAl2013,arXiv:GondPougkPears,NLAA:PearsonPorcStoll}), and stability of the associated Newton systems can be guaranteed by means of algorithmic regularization (which can be interpreted as the application of a proximal point method, see \cite{OMS:GondAlt,MathProgComp:FriedOrban,COAP:PougkGond}). Nevertheless, the resulting linear systems arising within IPMs are of larger dimensions as compared to those arising within pure Newton-like or proximal approaches, since \eqref{primal problem} needs to be appropriately reformulated into a smooth problem. Furthermore, IPM linear systems have significantly worse conditioning compared to linear systems arising within Newton-like or proximal-Newton methods.
\par The potential stability issues of the linear systems arising within Newton-like schemes can be alleviated by combining Newton-like methods with proximal point variants. Solvers based on the proximal point method can achieve superlinear convergence, assuming their penalty parameters increase indefinitely at a suitable rate (e.g. see \cite{MathOpRes:Rock,SIAMJCO:Rock}). 
For problems \eqref{primal problem}--\eqref{dual problem}, the sub-problems arising within proximal methods are nonsmooth convex optimization instances, and are typically solved by means of semismooth Newton strategies. The resulting linear systems that one has to solve are better conditioned than their (possibly regularized) interior-point counterparts (e.g. see \cite{NLAA:BergGondMartPearPoug,SIREV:DeSimone_etal,IPMs:FountoulakisEtAl2013,arXiv:GondPougkPears,IPMs:WaltzMoralNocedOrban}), however, convergence is expected to be slower, as the method does not enjoy the polynomial worst-case complexity of interior-point methods. Nevertheless, these better conditioned linear systems can in certain cases allow one to achieve better computational and/or memory efficiency, especially if the nonsmooth terms are appropriately handled. 
\par In this paper, we develop an active-set method for \eqref{primal problem}--\eqref{dual problem} by employing an appropriate proximal method of multipliers (PMM) using a standard semismooth Newton (SSN) strategy for solving the associated sub-problems. The SSN linear systems are approximately solved by means of Krylov subspace methods, using certain general-purpose preconditioners. Unlike most proximal point methods given in the literature (e.g. see the primal approaches in \cite{SIAMOPT:Leeetal,IEEE_DC:Patrinos_etal}, the dual approaches in \cite{SIAMOpt:Lietal2,SIAMOpt:Lietal} or the primal-dual approaches in \cite{COAP:Marchi,COAP:GillRobi,MathOpRes:Rock}), the proposed method introduces  proximal terms for each primal and dual variable of the problem, and this results in Newton linear systems which are easy to precondition and solve. We explicitly deal with each of the two nonsmooth terms of the objective in \eqref{primal problem}, which contributes to the simplification of the resulting SSN linear systems, and paves the way for generalizing this approach to a wider class of problems. In an accompanying paper we discuss the extension of the proposed method to problems with general piecewise-linear terms in the objective, in a way that allows for full exploitation of the piecewise-linear structure. 
\par We show that global convergence of the outer PMM scheme is guaranteed with the minimal assumption
of primal and dual feasibility, while global linear and local superlinear convergence holds under standard additional assumptions. Furthermore, we note that while most proximal Newton-like methods proposed in the literature allow inexactness in the solution of the associated Newton linear systems, the development of general-purpose preconditioners for them is lacking. Indeed, aside from the work in \cite{CAA:Porcellietal} which is specialized to the case of $L^1$-regularized PDE constrained optimization, most proximal Newton-like schemes utilizing Krylov subspace methods do so without employing any preconditioner (e.g. see \cite{NLAA:ChenQi,SIAMOpt:Lietal2,IEEE_DC:Patrinos_etal}). Drawing from the interior point literature, and by suitably specializing a preconditioning approach given in \cite{NLAA:BergGondMartPearPoug}, we propose general-purpose positive definite preconditioners that are robust with respect to the penalty parameters of the PMM. The positive definiteness of the preconditioners allows the use of (memory efficient) symmetric Krylov subspace solvers such as the minimum residual method (MINRES), \cite{PaigeSaundersSIAMNumAnal}.
\par The method deals with general box constraints and thus there is no need for introducing auxiliary variables to deal with upper and lower bounds separately, something that is required when employing conic-based solvers. As a result, the associated linear systems solved within SSN have significantly smaller dimensions, compared to linear systems arising within interior point methods suitable for the solution of $\ell_1$-regularized convex quadratic problems (e.g. see \cite{SIREV:DeSimone_etal,arXiv:GondPougkPears,NLAA:PearsonPorcStoll}), potentially making the proposed approach a more attractive alternative for large-scale instances. Finally, the method is easily warm-started using a proximal alternating direction method of multipliers to further improve its efficiency at a low computational cost. We provide numerical evidence to demonstrate that the proposed active-set scheme is efficient and robust when applied to elastic-net linear regression and $\ell_1$-regularized problems arising from PDE-constrained optimization. We compare the proposed algorithm against IP-PMM (see \cite{COAP:PougkGond}), which is a robust regularized interior point method utilized in \cite{SIREV:DeSimone_etal,arXiv:GondPougkPears}, as well as the well-known (ADMM-based) OSQP method \cite{osqp}, and numerically showcase certain benefits of the active-set method. Indeed, we observe that the linear systems arising within the proposed scheme are much better conditioned compared to those arising within IPMs, allowing for the solution of several instances at a significantly lower CPU time. Additionally, we demonstrate that the use of preconditioning and the structure exploitation of the $\ell_1$ terms allow the proposed active-set scheme to outscale and outperform OSQP and IP-PMM on large or numerically challenging instances.
\par To summarize, in Section \ref{sec: PAL penalties} we derive a proximal method of multipliers and discuss its convergence properties. Then, in Section \ref{sec: SSN method} we present a well-studied locally superlinearly convergent semismooth Newton scheme used to approximately solve the PMM sub-problems, noticing that its global convergence can be shown under additional regularity assumptions. Furthermore, we propose general-purpose preconditioners for the associated SSN linear systems and analyze their effectiveness. In Section \ref{sec: warm start} we present a warm-starting strategy for the method. Then, in Section \ref{sec: applications}, the overall approach is extensively tested on certain linear regression and partial differential equation constrained optimization problems. Finally, we derive some conclusions in Section \ref{sec: Conclusions}.

\paragraph{Notation} Given a vector $x$ in $\mathbb{R}^n$, $\|x\|$ denotes the Euclidean norm. Letting $R \succ 0$ be a symmetric positive definite matrix, we denote $\|x\|_R^2 = x^\top R x$. Given a closed set $\mathcal{K} \subset \mathbb{R}^n$, we write $\Pi_{\mathcal{K}}(x) \coloneqq \arg\min\{\|x-z\| \colon z \in \mathcal{K}\}$, while for any $R \succ 0$ we write $\textnormal{dist}_R(z,\mathcal{K}) \coloneqq \inf_{z'\in \mathcal{K}}\|z-z'\|_R$. If $R = I$, we assume that $\textnormal{dist}_I(z,\mathcal{K}) \equiv \textnormal{dist}(z,\mathcal{K})$. Given an arbitrary rectangular matrix $A$, $\sigma_{\max}(A)$ denotes its maximum singular value. For an arbitrary square matrix $B$, $\lambda(B)$ is the set of eigenvalues of $B$ while $\lambda_{\max}(B)$ (resp. $\lambda_{\min}(B)$) denotes its maximum (resp. minimum) eigenvalue. Given an index set  $\mathcal{D}$, $|\mathcal{D}|$ denotes its cardinality. Given a rectangular matrix $A \in \mathbb{R}^{m \times n}$ and an index set $\mathcal{B} \subseteq \{1,\ldots,n\}$, we denote the columns of $A$, the indices of which belong to $\mathcal{B}$, as $A_{\mathcal{B}}$. Given a square matrix $Q \in \mathbb{R}^{n\times n}$, we denote the subset of columns and rows of $Q$, the indices of which belong to $\mathcal{B}$, as $Q_{\left(\mathcal{B},\mathcal{B}\right)}$. We denote by $\textnormal{Diag}(Q)$ the diagonal matrix with diagonal elements equal to those of $Q$, and by $\textnormal{Off}(Q)$ the square matrix with off-diagonal elements equal to those of $Q$ and zero diagonal.
\section{A primal-dual proximal method of multipliers} \label{sec: PAL penalties}
\par In what follows, we derive the proximal augmented Lagrangian penalty function corresponding to the primal problem \eqref{primal problem}. Using the latter, we derive a primal-dual PMM for solving the pair \eqref{primal problem}--\eqref{dual problem}. The convergence of this PMM scheme is subsequently analyzed, assuming that we are able to find sufficiently accurate solutions to its associated sub-problems. In the next section, we briefly present a standard semismooth Newton scheme for the solution of these sub-problems. 
\subsection{Derivation of the outer method}
\par We begin by deriving the Lagrangian associated to \eqref{primal problem}. First, we define the function $\varphi(x) \coloneqq c^\top x + \frac{1}{2} x^\top Q x + g(x) + \delta_{\mathcal{K}}(x) + \delta_{\{0\}}(b-Ax)$. Following the dualization strategy proposed in \cite[Chapter 11]{Springer:RockWets}, we let
$\hat{\varphi}(x,u',w') \coloneqq c^\top x + \frac{1}{2} x^\top Q x +  g(x) + \delta_{\mathcal{K}}(x+w') + \delta_{\{0\}}(b-Ax+ u')$, for which it holds that $\varphi(x) =\hat{\varphi}(x,0,0)$. Then, the Lagrangian associated to \eqref{primal problem} reads:
\begin{equation*}
\begin{split}
\ell (x,y,z) &\coloneqq\ \inf_{u',w'} \bigg\{\hat{\varphi}(x,u',w') - y^\top  u' - z^\top  w'\bigg\} \\ &=\ c^\top x + \frac{1}{2} x^\top Q x + g(x)  - \sup_{w'} \bigg\{z^\top  w' - \delta_{\mathcal{K}}(x+w') \bigg\} - \sup_{u'} \bigg\{y^\top  u' - \delta_{\{0\}}(b-Ax + u') \bigg\} \\
&=\ c^\top x + \frac{1}{2} x^\top Q x + g(x) +  z^\top  x - \delta^*_{\mathcal{K}}(z) - y^\top (Ax-b),
\end{split}
\end{equation*}
\noindent where we used the definition of the Fenchel conjugate. Before deriving the augmented Lagrangian associated to \eqref{primal problem}, we introduce some necessary notation as well as relations that will be used later on. Firstly, given a convex function $p \colon \mathbb{R}^n \mapsto \mathbb{R}$, we define
\[ \textbf{prox}_p (u) \coloneqq \arg\min_x \bigg\{ p(x) + \frac{1}{2}\|u-x\|^2\bigg\}.\] 
\noindent Then, given some positive constant $\beta$, it holds that (\emph{Moreau Identity}, see \cite{BSMF:Moreau}):
\begin{equation} \label{Moreau Identity}
\textbf{prox}_{\beta p}(u') + \beta \textbf{prox}_{\beta^{-1}p^*}(\beta^{-1}u') = u'.
\end{equation}
\noindent Finally,  we have that (e.g. see \cite[Equation 2.2]{ApplMathOpt:HirStroNgu})
\begin{equation} \label{property of prox of f and its conjugate}
\frac{1}{2}\|\textbf{prox}_p (x)\|^2 + p^*(\textbf{prox}_{p^*}(x)) = \frac{1}{2}\bigg[\|x\|^2 - \|x-\textbf{prox}_{p}(x)\|^2 \bigg] - p(\textbf{prox}_{p}(x)).
\end{equation}
\par Given a penalty parameter $\beta > 0$, the augmented Lagrangian corresponding to \eqref{primal problem} reads:
\begin{equation} \label{augmented lagrangian of the primal 1}
\begin{split}
\mathcal{L}_{\beta}(x; y, z) &\coloneqq\ \sup_{u',w'} \bigg\{\ell(x,u',w') - \frac{1}{2\beta}\|u'-y\|^2 - \frac{1}{2\beta}\|w'-z\|^2 \bigg\} \\
&=\ c^\top x + \frac{1}{2} x^\top Q x + g(x) - \inf_{u'} \bigg\{u'^\top (Ax-b) + \frac{1}{2\beta}\|u'-y\|^2 \bigg\} \\&\quad \ - \inf_{w'} \bigg\{-w'^\top x + \delta^*_{\mathcal{K}}(w') + \frac{1}{2\beta}\|w'-z\|^2 \bigg\} \\
&=\ \scalemath{0.97}{c^\top x + \frac{1}{2} x^\top Q x + g(x) - y^\top (Ax-b) + \frac{\beta}{2}\|Ax-b\|^2 + x^\top \big(\textbf{prox}_{\beta \delta^*_{\mathcal{K}}}(z+ \beta x)\big)}\\ &\quad \  - \delta^*_{\mathcal{K}}\big( \textbf{prox}_{\beta \delta^*_{\mathcal{K}}}(z+ \beta x)\big) - \frac{1}{2\beta}\|\textbf{prox}_{\beta \delta^*_{\mathcal{K}}}(z+ \beta x) - z\|^2,
\end{split}
\end{equation}
\noindent where we used the fact that if $p_1(u') = p_2(u') + r^\top x$, where $p_1(\cdot)$ and $p_2(\cdot)$ are two closed convex functions, and $r$ is a vector, then $\textbf{prox}_{p_1}( u') = \textbf{prox}_{p_2}(u'-r)$. Using \eqref{Moreau Identity}, we obtain:
\begin{equation} \label{augmented lagrangian of the primal 2}
\begin{split}
\mathcal{L}_{\beta}(x; y, z) &= \scalemath{1}{\ c^\top x + \frac{1}{2} x^\top Q x + g(x) - y^\top (Ax-b) + \frac{\beta}{2}\|Ax-b\|^2} \\ &\qquad \ \scalemath{1}{+\ x^\top \big((z+\beta x) - \beta \textbf{prox}_{\beta^{-1} \delta_{\mathcal{K}}}(\beta^{-1}z+  x)\big) - \delta^*_{\mathcal{K}}\big( \textbf{prox}_{\beta \delta^*_{\mathcal{K}}}(z+ \beta x)\big)}\\ &\qquad \  \scalemath{1}{- \frac{1}{2\beta}\|\beta x - \beta\textbf{prox}_{\beta^{-1} \delta_{\mathcal{K}}}(\beta^{-1}z+  x)\|^2}\\
&= \ \scalemath{1}{c^\top x + \frac{1}{2} x^\top Q x + g(x)  - y^\top (Ax-b) + \frac{\beta}{2}\|Ax-b\|^2  + x^\top \big(z+\frac{\beta}{2} x\big)}\\ &\qquad \ \scalemath{0.97}{- \bigg(\delta^*_{\mathcal{K}}\big( \textbf{prox}_{\beta \delta^*_{\mathcal{K}}}(z+ \beta x)\big)  + \frac{1}{2\beta}\| \beta\textbf{prox}_{\beta^{-1} \delta_{\mathcal{K}}}(\beta^{-1}z+  x)\|^2\bigg).}
\end{split}
\end{equation}
\noindent Finally, we write:
\begin{equation*}
\begin{split}
R \coloneqq &\ \scalemath{1}{\bigg(\delta^*_{\mathcal{K}}\big( \textbf{prox}_{\beta \delta^*_{\mathcal{K}}}(z+ \beta x)\big)  + \frac{1}{2\beta}\| \beta\textbf{prox}_{\beta^{-1} \delta_{\mathcal{K}}}(\beta^{-1}z+  x)\|^2\bigg)}  \\
=&\ \scalemath{1}{\frac{1}{\beta}\Bigg(\frac{1}{2}\bigg(\|z+\beta x\|^2 - \|z + \beta x - \beta \textbf{prox}_{\beta^{-1}\delta_{\mathcal{K}}}(\beta^{-1}z + x)\|^2 \bigg)-\beta \delta_{\mathcal{K}} \big(\textbf{prox}_{\beta^{-1}\delta_{\mathcal{K}}}(\beta^{-1}z + x) \big) \Bigg)}\\
=&\ \scalemath{0.97}{\frac{1}{2\beta}\bigg(\|z\|^2 + \beta^2 \|x\|^2 + 2\beta z^\top x -\|z + \beta x - \beta \Pi_{\mathcal{K}}(\beta^{-1}z + x)\|^2  \bigg)},
\end{split}
\end{equation*}
\noindent where we used \eqref{property of prox of f and its conjugate}, along with the fact that $(\beta \delta^*_{\mathcal{K}})^*(x) = \beta\delta_{\mathcal{K}}(\beta^{-1}x)$, while $\textbf{prox}_{(\beta \delta^*_{\mathcal{K}})^*}(x) = \beta \textbf{prox}_{\beta^{-1}\delta_{\mathcal{K}}}(\beta^{-1}x)$. Substituting $R$ in the last line of \eqref{augmented lagrangian of the primal 2}, yields:
\begin{equation} \label{final augmented lagrangian of the primal}
\begin{split}
\mathcal{L}_{\beta}(x; y, z) &= \ c^\top x + \frac{1}{2} x^\top Q x + g(x) - y^\top (Ax-b) + \frac{\beta}{2}\|Ax-b\|^2  \\ &\qquad \ - \frac{1}{2\beta}\|z\|^2  + \frac{1}{2\beta}  \|z + \beta x - \beta \Pi_{\mathcal{K}}(\beta^{-1}z + x)\|^2. 
\end{split}
\end{equation}
\par Assume that at iteration $k\geq 0 $ we have the estimates $(x_k,y_k,z_k)$ as well as the penalty parameters $\beta_k,\ \rho_k$, such that $\rho_k \coloneqq \frac{\beta_k}{\tau_k}$, where $\{\tau_k\}_{k = 0}^{\infty}$ is a non-increasing positive sequence, i.e. $\tau_k > 0$ for all $k \geq 0$. We define the continuously differentiable function $\phi(x) \equiv\  \phi_{\rho_k,\beta_k}(x;x_k,y_k,z_k) \coloneqq  \mathcal{L}_{\beta_k}(x;y_k,z_k)  - g(x) + \frac{1}{2\rho_k}\|x-x_k\|^2.$ Using the previous notation, we need to find $x^*$ such that
\[\left(\nabla \phi(x^*)\right)^\top (x - x^*) + g(x) - g(x^*) \geq 0,\qquad \forall\ x \in \mathbb{R}^n,\textnormal{ where}\]
\[ \nabla \phi(x) = c +  Q x - A^\top y_k + \beta_k A^\top  (Ax-b) + (z_k + \beta_k x) - \beta_k \Pi_{\mathcal{K}}(\beta_k^{-1}z_k + x) + \rho_k^{-1}(x-x_k).\]
\noindent Let $y = y_k - \beta_k (Ax-b)$ and write the optimality conditions of $\underset{x}{\min}\ \psi(x) \coloneqq \phi(x)+ g(x)$ as
\begin{equation} \label{PD-SSN-PMM Proximal Optimality Conditions}
(0,0) \in  F_{\beta_k,\rho_k}(x,y) \coloneqq   \left\{(u',v') \colon u' \in r_{\beta_k,\rho_k}(x,y) + \partial g(x),\quad v' = Ax + \beta_k^{-1}(y-y_k)-b \right\},
\end{equation}
\noindent where $r_{\beta_k,\rho_k}(x,y) \coloneqq   c + Q x - A^\top  y + (z_k + \beta_k x) - \beta_k \Pi_{\mathcal{K}}(\beta_k^{-1}z_k + x) + \rho_k^{-1}(x-x_k).$
\noindent We now describe the primal-dual PMM in Algorithm \ref{primal-dual PMM algorithm}.
\renewcommand{\thealgorithm}{PD-PMM}

\begin{algorithm}[!ht]
\caption{Primal-dual proximal method of mutlipliers}
    \label{primal-dual PMM algorithm}
    \textbf{Input:}  $(x_0,y_0,z_0) \in \mathbb{R}^n \times \mathbb{R}^m \times \mathbb{R}^n$, $\beta_0,\ \beta_{\infty},\ \tau_{\infty} > 0$, $\{\tau_k\}_{k=0}^{\infty}$ such that $\tau_k \searrow \tau_{\infty} > 0$.
\begin{algorithmic}
\State Choose a sequence of positive numbers $\{\epsilon_k\}$ such that $\epsilon_k \rightarrow 0$. 
\For {($k = 0,1,2,\ldots$)}
\State Find $(x_{k+1},y_{k+1})$ such that:
\begin{equation} \label{primal-dual PMM main sub-problem}
\textnormal{dist}\left(0,F_{\beta_k,\rho_k}\left(x_{k+1},y_{k+1}\right)\right)  \leq \epsilon_k,
\end{equation}
\State where, letting $\hat{r} = r_{\beta_k,\rho_k}(x_{k+1},y_{k+1})$, we have
\[\textnormal{dist}\left(0,F_{\beta_k,\rho_k}(x_{k+1},y_{k+1})\right) = \left\|\begin{bmatrix}\hat{r}+ \Pi_{\partial\left(g\left(x_{k+1}\right)\right)}\left(-\hat{r}\right)\\
Ax_{k+1} + \beta_k^{-1}(y_{k+1}-y_k) - b
\end{bmatrix}\right\|. \]
   \begin{flalign}  \label{primal-dual PMM z-update}
\ \ \ \ z_{k+1} &= \  (z_k + \beta_k x_{k+1}) - \beta_k\Pi_{\mathcal{K}}\big(\beta_k^{-1}z_k + x_{k+1} \big).&&
\end{flalign} 
\begin{flalign} \ \ \ \ \beta_{k+1} &\nearrow \beta_{\infty} \leq \infty, \quad \rho_{k+1} = \frac{\beta_{k+1}}{\tau_{k+1}}.&&
\end{flalign}
\EndFor
\State \Return $(x_k,y_k,z_k)$.
\end{algorithmic}
\end{algorithm}
\par Notice that we allow step \eqref{primal-dual PMM main sub-problem} to be computed inexactly. In Section \ref{subsection: PMM convergence analysis} we will provide precise conditions on the error sequence guaranteeing that Algorithm \ref{primal-dual PMM algorithm} achieves global convergence, and additional conditions for achieving a local linear or superlinear rate (where the local superlinear convergence requires that $\beta_k \rightarrow \infty$). Further conditions on the starting point and on the starting penalty parameter $\beta_0$, required to guarantee a global linear convergence rate, are also discussed.  At this point, we note that the characterization of $\textnormal{dist}\left(0,F_{\beta_k,\rho_k}(x,y)\right)$ follows from the definition of $F_{\beta_k,\rho_k}(x,y)$ as well as from the definition of $\textnormal{dist}(x,\mathcal{A})$ for some closed convex set $\mathcal{A}$.  Finally, we observe that the condition in \eqref{primal-dual PMM main sub-problem} can be evaluated expeditiously, since $g(x) = \|Dx\|_1$ for some diagonal matrix $D \succeq 0$ (i.e. its subdifferential is explicitly known).

\subsection{Convergence analysis} \label{subsection: PMM convergence analysis}
\par In this section we provide conditions on the error sequence $\{\epsilon_k\}$ in \eqref{primal-dual PMM main sub-problem} that guarantee the convergence of Algorithm \ref{primal-dual PMM algorithm}, potentially at a global linear or local superlinear rate. The analysis is based on \cite[Section 2]{SIAMOpt:Lietal} (or by an extension of the analyses in \cite{MathOpRes:Rock,SIAMJCO:Rock}) after connecting Algorithm \ref{primal-dual PMM algorithm} to an appropriate proximal point iteration. First, we define the maximal monotone operator $T_{\ell} \colon \mathbb{R}^{2n+m} \rightrightarrows \mathbb{R}^{2n+m}$, associated to \eqref{primal problem}--\eqref{dual problem}:
\begin{equation} \label{maximal monotone operator associated to primal-dual problem}
\begin{split}
T_{\ell}(x,y,z) &\coloneqq \bigg\{(u',v',w') \colon  v' \in  Qx + c - A^\top y + z + \partial g(x),\  u' = Ax - b,\  w'+ x \in \partial\delta^*_{\mathcal{K}}(z)\bigg\}\\
& = \bigg\{(u',v',w') \colon  v' \in Qx + c - A^\top y + z + \partial g(x),\   u' = Ax - b,\  z \in \partial\delta_{\mathcal{K}}(x+w')\bigg\}.
\end{split}
\end{equation}
\noindent  The inverse of this operator reads
\begin{equation} \label{inverse of maximal monotone operator associated to primal-dual problem}
\begin{split}
T^{-1}_{\ell}(u',v',w') &\coloneqq \arg\max_{y,z}\min_x\left\{\ell(x,y,z) + u'^\top x - v'^\top y -w'^\top z\right\}.
\end{split}
\end{equation}
\noindent Notice that Assumption \ref{assumption: solution of the QP} implies that $T^{-1}_{\ell}(0) \neq \emptyset$. Following the result in \cite{SIAMOpt:Lietal}, we note that $T_{\ell}$ is in fact a polyhedral multifunction (see \cite{RobinsonMathProgStud} for a detailed discussion on the properties of such multifunctions). In light of this property of $T_{\ell}$ we note that the following specialized \emph{metric subregularity} condition (see \cite{Springer:DonRock} for a definition) holds automatically without any additional assumptions.
\begin{lemma}\label{lemma: error condition for polyhedral multifucntion}
For any $r > 0$, there exists $\kappa > 0$ such that
\begin{equation} \label{eqn: error condition for polyhedral multifunction}
\textnormal{dist}\big(p,T_{\ell}^{-1}(0)\big) \leq \kappa\ \textnormal{dist}\big(0,T_{\ell}(p)\big),\quad \forall\ p \in \mathbb{R}^{2n+m},\ \textnormal{with }\textnormal{dist}\big(p,T_{\ell}^{-1}(0)\big) \leq r.
\end{equation}
\end{lemma}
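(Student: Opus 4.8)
The plan is to invoke the theory of polyhedral multifunctions, since the paper has already observed that $T_\ell$ is polyhedral. Recall that a multifunction $\Phi \colon \mathbb{R}^N \rightrightarrows \mathbb{R}^N$ is called \emph{polyhedral} (or \emph{piecewise polyhedral}) if its graph is the union of finitely many polyhedral convex sets. The key structural fact, due to Robinson \cite{RobinsonMathProgStud}, is that such multifunctions enjoy a \emph{local upper Lipschitz} property at every point of their domain: if $\Phi$ is polyhedral and $\bar{q} \in \Phi(\bar{p})$, then there exist a constant $\kappa > 0$ and a neighbourhood $U$ of $\bar{p}$ such that $\Phi(p) \subseteq \Phi(\bar{p}) + \kappa \|p - \bar{p}\| \, \mathbb{B}$ for all $p \in U$, where $\mathbb{B}$ is the closed unit ball. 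Applying this to the inverse map $T_\ell^{-1}$, which is itself polyhedral (the inverse of a polyhedral multifunction has a graph obtained by swapping coordinates, hence still a finite union of polyhedra), at the point $\bar{q} = 0$ yields exactly a local error bound of the desired shape near the solution set $T_\ell^{-1}(0)$.

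Concretely, first I would record that $T_\ell^{-1}$ is polyhedral and that $T_\ell^{-1}(0) = \mathcal{S}$ is a nonempty polyhedral set (nonempty by Assumption \ref{assumption: solution of the QP}, via \eqref{inverse of maximal monotone operator associated to primal-dual problem}). Then I would apply Robinson's upper Lipschitz result to $T_\ell^{-1}$ at $0$: there is $\kappa_0 > 0$ and $\varepsilon > 0$ so that
\[
T_\ell^{-1}(v) \subseteq T_\ell^{-1}(0) + \kappa_0 \|v\| \, \mathbb{B}, \qquad \forall\, v \in \mathbb{R}^{2n+m},\ \|v\| \leq \varepsilon.
\]
Next, given $p$ with $\textnormal{dist}(p, T_\ell^{-1}(0)) \leq r$, pick any $v \in T_\ell(p)$; then $p \in T_\ell^{-1}(v)$. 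If $\|v\| \leq \varepsilon$, the displayed inclusion gives a point $p' \in T_\ell^{-1}(0)$ with $\|p - p'\| \leq \kappa_0 \|v\|$, hence $\textnormal{dist}(p, T_\ell^{-1}(0)) \leq \kappa_0 \|v\|$; minimizing over $v \in T_\ell(p)$ gives $\textnormal{dist}(p, T_\ell^{-1}(0)) \leq \kappa_0\, \textnormal{dist}(0, T_\ell(p))$, which is \eqref{eqn: error condition for polyhedral multifunction} with $\kappa = \kappa_0$ in the regime $\textnormal{dist}(0,T_\ell(p)) \leq \varepsilon$.

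It remains to handle the complementary regime, i.e.\ points $p$ with $\textnormal{dist}(p, T_\ell^{-1}(0)) \leq r$ but $\textnormal{dist}(0, T_\ell(p)) > \varepsilon$. There the inequality is trivial: $\textnormal{dist}(p, T_\ell^{-1}(0)) \leq r = \frac{r}{\varepsilon}\,\varepsilon < \frac{r}{\varepsilon}\,\textnormal{dist}(0,T_\ell(p))$, so the bound holds with constant $r/\varepsilon$. Taking $\kappa = \max\{\kappa_0, r/\varepsilon\}$ covers both cases and establishes the claim. One subtlety to address is that if $T_\ell(p) = \emptyset$ then $\textnormal{dist}(0, T_\ell(p)) = +\infty$ by convention and the inequality is vacuous, so we may assume $T_\ell(p) \neq \emptyset$, and since $T_\ell(p)$ is a (closed) polyhedral set the distance $\textnormal{dist}(0, T_\ell(p))$ is attained. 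The only genuinely nontrivial ingredient is Robinson's upper Lipschitz theorem for polyhedral multifunctions; everything else is a routine case split. I expect the main point to emphasize is simply the verification that $T_\ell$ — and hence $T_\ell^{-1}$ — is polyhedral, which follows because $\partial g$, $\partial \delta_{\mathcal{K}}$, and the linear pieces $Qx + c - A^\top y + z$, $Ax - b$ all have polyhedral graphs (as $g = \|D\cdot\|_1$ and $\mathcal{K}$ is a box), and finite sums, compositions with linear maps, and Cartesian products of polyhedral multifunctions remain polyhedral.
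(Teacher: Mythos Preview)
Your argument is correct and is precisely the standard route: the paper itself does not give a proof but simply defers to \cite[Lemma 2.4]{SIAMOpt:Lietal} and Robinson \cite{RobinsonMathProgStud}, and what you have written is exactly the content of those references---apply Robinson's local upper Lipschitz property to the polyhedral multifunction $T_\ell^{-1}$ at $0$, then extend to the whole ball $\{\textnormal{dist}(p,T_\ell^{-1}(0))\le r\}$ by the trivial bound $r/\varepsilon$ on the complementary set. The only minor stylistic point is that the phrase ``minimizing over $v\in T_\ell(p)$'' is cleaner if stated as: since $T_\ell(p)$ is closed and $\textnormal{dist}(0,T_\ell(p))\le\varepsilon$, pick $v^*\in T_\ell(p)$ attaining the distance and apply the inclusion to that single $v^*$.
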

\begin{proof}
\noindent The reader is referred to \cite[Lemma 2.4]{SIAMOpt:Lietal} as well as \cite{RobinsonMathProgStud}.
\end{proof}
\par Next, let some sequence of positive definite matrices $\{R_k\}_{k=0}^{\infty}$ with $R_k \coloneqq \tau_k I_n \oplus I_m \oplus I_n$, for all $k \geq 0$, where $\tau_k$ is defined in Algorithm \ref{primal-dual PMM algorithm} and $\oplus$ denotes the direct sum of two matrices. We define the single-valued proximal operator $P_k \colon \mathbb{R}^{2n + m} \mapsto \mathbb{R}^{2n+m}$, associated to \eqref{maximal monotone operator associated to primal-dual problem}:
\begin{equation} \label{proximal operator}
P_k \coloneqq \big(R_k + \beta_k T_{\ell}\big)^{-1} R_k.
\end{equation}
\noindent In particular, under our assumptions on the matrices $R_k$, we have that (e.g. see \cite{SIAMJCO:Rock}) for all $(u_1,v_1,w_1),\ (u_2,v_2,w_2) \in \mathbb{R}^{2n+m}$, the following inequality (non-expansiveness) holds
\begin{equation} \label{non-expansiveness of P_k}
\left\|(u_1,v_1,w_1)-P_k(u_2,v_2,w_2)\right\|_{R_k} \leq \left\|(u_1,v_1,w_1)-(u_2,v_2,w_2)\right\|_{R_k}. \end{equation}
\noindent Obviously, we can observe that if $(x^*,y^*,z^*) \in T_{\ell}^{-1}(0)$, then $P_k(x^*,y^*,z^*) = (x^*,y^*,z^*)$. We are now able to connect Algorithm \ref{primal-dual PMM algorithm} with the proximal point iteration produced by \eqref{proximal operator}.
\begin{Proposition} \label{proposition: connection to PMM}
Let $\{(x_k,y_k,z_k)\}_{k=0}^{\infty}$ be a sequence of iterates produced by Algorithm \textnormal{\ref{primal-dual PMM algorithm}}. Then, for every $k \geq 0$ we have that
\begin{equation} \label{connection of PMM with PPA error}
\left\|(x_{k+1},y_{k+1},z_{k+1})-P_k(x_k,y_k,z_k)\right\|_{R_k} \leq \frac{\beta_k}{\min\{\sqrt{\tau_k},1\}}\textnormal{dist}\left(0,F_{\beta_k,\rho_k}\left(x_{k+1},y_{k+1}\right)\right).
\end{equation}
\end{Proposition}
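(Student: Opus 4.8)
The plan is to unfold the definition of the proximal operator $P_k = (R_k + \beta_k T_\ell)^{-1} R_k$ and show that $(x_{k+1}, y_{k+1}, z_{k+1})$ is an \emph{inexact} evaluation of $P_k$ at $(x_k, y_k, z_k)$, with the inexactness controlled precisely by $\textnormal{dist}(0, F_{\beta_k,\rho_k}(x_{k+1},y_{k+1}))$. First I would write out what it means for a point $(\bar x, \bar y, \bar z) = P_k(x_k,y_k,z_k)$: it is equivalent to $R_k(x_k,y_k,z_k) - R_k(\bar x,\bar y,\bar z) \in \beta_k T_\ell(\bar x,\bar y,\bar z)$, i.e. $\tau_k \beta_k^{-1}(x_k - \bar x) \in$ (the first component of $T_\ell$ evaluated suitably), and analogously for the $y$ and $z$ components, using the explicit form of $T_\ell$ in \eqref{maximal monotone operator associated to primal-dual problem}. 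The key observation is that the $z$-update \eqref{primal-dual PMM z-update} is \emph{exactly} the resolvent step in the $z$-block: plugging $z_{k+1} = (z_k + \beta_k x_{k+1}) - \beta_k \Pi_{\mathcal{K}}(\beta_k^{-1}z_k + x_{k+1})$ into the inclusion $z_{k+1} \in \partial \delta_{\mathcal{K}}(x_{k+1} + w')$ with $w' = \beta_k^{-1}(z_k - z_{k+1})$ holds with equality (this is essentially the Moreau-decomposition identity already used to derive \eqref{final augmented lagrangian of the primal}). Likewise, the definition $y = y_k - \beta_k(Ax-b)$ baked into $F_{\beta_k,\rho_k}$ makes the $y$-block residual equal to $Ax_{k+1} + \beta_k^{-1}(y_{k+1}-y_k) - b$, which is exactly the second component of $\beta_k^{-1}$ times the $u'$-slot of $T_\ell$.

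Next I would assemble the residual vector. After substituting the exact $z$-update and the $y$-relation, the only slot where inexactness enters is the first block: $\hat r + \partial g(x_{k+1})$, where $\hat r = r_{\beta_k,\rho_k}(x_{k+1},y_{k+1})$. Using the characterization of $\textnormal{dist}(0, F_{\beta_k,\rho_k})$ given in the algorithm statement, the quantity $\|\hat r + \Pi_{\partial g(x_{k+1})}(-\hat r)\|$ is precisely the norm of the "best" element of $\hat r + \partial g(x_{k+1})$, call it $d_1$, and the $y$-block residual is $d_2 = Ax_{k+1} + \beta_k^{-1}(y_{k+1}-y_k)-b$; the $z$-block residual is $0$. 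One then checks that $(x_{k+1},y_{k+1},z_{k+1})$ satisfies $R_k(x_k,y_k,z_k) - R_k(x_{k+1},y_{k+1},z_{k+1}) - \beta_k (d_1, d_2, 0) \in \beta_k T_\ell(x_{k+1},y_{k+1},z_{k+1})$, i.e. $(x_{k+1},y_{k+1},z_{k+1}) = (R_k + \beta_k T_\ell)^{-1}\big(R_k(x_k,y_k,z_k) - \beta_k(d_1,d_2,0)\big)$. Since $(R_k + \beta_k T_\ell)^{-1}$ is $R_k^{-1}$-firmly-nonexpansive (equivalently, nonexpansive in $\|\cdot\|_{R_k}$ after the $R_k$-scaling), comparing this with $P_k(x_k,y_k,z_k) = (R_k + \beta_k T_\ell)^{-1} R_k(x_k,y_k,z_k)$ gives
\[
\|(x_{k+1},y_{k+1},z_{k+1}) - P_k(x_k,y_k,z_k)\|_{R_k} \le \| \beta_k(d_1,d_2,0)\|_{R_k^{-1}} = \beta_k \sqrt{\tau_k^{-1}\|d_1\|^2 + \|d_2\|^2}.
\]
Finally, bounding $\tau_k^{-1}\|d_1\|^2 + \|d_2\|^2 \le \max\{\tau_k^{-1},1\}(\|d_1\|^2 + \|d_2\|^2) = \min\{\tau_k,1\}^{-1}\textnormal{dist}(0,F_{\beta_k,\rho_k}(x_{k+1},y_{k+1}))^2$ yields the claimed inequality \eqref{connection of PMM with PPA error}.

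The main obstacle I anticipate is making the "$z$-update is an exact resolvent step" claim airtight, i.e. verifying that with $z_{k+1}$ defined by \eqref{primal-dual PMM z-update} the inclusion in the third block of $T_\ell$ holds with no residual, and simultaneously that the induced $w'$-term $\beta_k^{-1}(z_k - z_{k+1})$ is consistent with the $x$-block inclusion (so that the same $z_{k+1}$ works in both). This requires carefully invoking the identity $z_{k+1} = z_k + \beta_k x_{k+1} - \beta_k \Pi_{\mathcal{K}}(\beta_k^{-1}z_k + x_{k+1})$ together with the fact that $u - \Pi_{\mathcal{K}}(u) \in \partial \delta_{\mathcal{K}}^*(\Pi_{\mathcal{K}}(u))$, or equivalently $\Pi_{\mathcal{K}}(u) \in \partial \delta_{\mathcal{K}}^*(u - \Pi_{\mathcal{K}}(u))$, in the form appearing in the second representation of $T_\ell$ (which the paper attributes to \cite[Lemma 5.7]{arXiv:ClasValk}). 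A secondary, more bookkeeping-level difficulty is keeping the $R_k$-weighting straight throughout — in particular that the $\tau_k$ factor sits on the $x$-block of $R_k$, so dividing by it when passing to $\|\cdot\|_{R_k^{-1}}$ produces the $\tau_k^{-1}$ that ultimately becomes the $\min\{\sqrt{\tau_k},1\}$ in the denominator. Once the exactness of the $y$- and $z$-blocks is established, the rest is the standard "inexact proximal point step" estimate and should go through routinely.
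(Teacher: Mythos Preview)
Your proposal is correct and follows essentially the same route as the paper's proof: establish the inclusion $\beta_k(d_1,d_2,0) + R_k\big((x_k,y_k,z_k)-(x_{k+1},y_{k+1},z_{k+1})\big) \in \beta_k T_\ell(x_{k+1},y_{k+1},z_{k+1})$, rewrite this as $(x_{k+1},y_{k+1},z_{k+1}) = P_k\big((x_k,y_k,z_k) + \beta_k R_k^{-1}(d_1,d_2,0)\big)$, and apply non-expansiveness of $P_k$ in $\|\cdot\|_{R_k}$. Two minor remarks: (i) the sign in your displayed inclusion should be $+\beta_k(d_1,d_2,0)$ rather than $-\beta_k(d_1,d_2,0)$ (check by expanding $r_{\beta_k,\rho_k}(x_{k+1},y_{k+1})$ and using $\rho_k^{-1}=\tau_k\beta_k^{-1}$), though this is immaterial for the norm bound; (ii) for the $z$-block exactness the paper argues component-wise via the explicit characterization of $\partial\delta_{[l_i,u_i]}$ rather than invoking the Moreau decomposition abstractly, but either verification works.
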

\begin{proof}
\noindent Firstly, let us define the pair 
\[\scalemath{0.95}{(\hat{u},\hat{v}) \coloneqq  \left(r_{\beta_k,\rho_k}\left(x_{k+1},y_{k+1}\right) + \Pi_{\partial g(x_{k+1})}\left(-r_{\beta_k,\rho_k}\left(x_{k+1},y_{k+1}\right) \right),Ax_{k+1} + \beta_k^{-1}\left(y_{k+1}-y_{k}\right)-b\right).}\]
\noindent We observe that given a sequence produced by Algorithm \ref{primal-dual PMM algorithm}, we have
\begin{equation}\label{connection of PPM with PPA eqn}
\left(\hat{u},\hat{v},0\right)  +
\beta_k^{-1}\left( \tau_k(x_k - x_{k+1}),y_k - y_{k+1},z_k - z_{k+1}\right) \in\ T_{\ell}(x_{k+1},y_{k+1},z_{k+1}).
\end{equation}
\noindent To show this, we firstly notice that
\begin{equation*} 
\begin{bmatrix} \hat{u} \\
\hat{v} \end{bmatrix} +  
\beta_k^{-1}\begin{bmatrix} \tau_k(x_k - x_{k+1})\\
 y_k - y_{k+1}
 \end{bmatrix} \in \begin{bmatrix}
 Qx_{k+1} + c - A^\top y_{k+1} + z_{k+1} + \partial g(x_{k+1})\\
 Ax_{k+1} - b
 \end{bmatrix},
 \end{equation*}
\noindent where we used the definition of the $(\hat{u},\hat{v})$ as well as \eqref{primal-dual PMM z-update}. It remains to show that $\beta_k^{-1}(z_k - z_{k+1}) \in - x_{k+1} + \partial \delta_{\mathcal{K}}^*(z_{k+1})$. Alternatively, from the second equality in \eqref{maximal monotone operator associated to primal-dual problem}, we need to show that $z_{k+1} \in \partial \delta_{\mathcal{K}}\big(x_{k+1} + \beta_k^{-1}(z_k - z_{k+1})\big)$. To that end, we characterize the subdifferential of $\partial \delta_{\mathcal{K}}(\cdot)$. By convention we have that $\partial \delta_{\mathcal{K}}(\tilde{x}) = \emptyset$ if $\tilde{x} \notin \mathcal{K}$. Hence, assume that $\tilde{x} \in \mathcal{K}$. Then, we obtain
\[ \partial \delta_{\mathcal{K}}(\tilde{x}) = \big\{\tilde{z} \in \mathbb{R}^n \colon \tilde{z}^\top (\hat{x} - \tilde{x}) \leq \delta_{\mathcal{K}}(\hat{x}),\ \forall\ \hat{x} \in \mathbb{R}^n\big\}.\]
\noindent By inspection, we fully characterize the latter component-wise, for any $i \in \{1,\ldots,n\}$, as follows
\begin{equation*}
\partial \delta_{[l_i,u_i]}(\tilde{x}_i) = \begin{cases} 
      \{0\} & \tilde{x}_i \in (l_i,u_i), \\
      (-\infty,0] & \tilde{x}_i = l_i, \\
      [0,\infty) & \tilde{x}_i = u_i. 
   \end{cases}
\end{equation*}
\noindent From \eqref{primal-dual PMM z-update} we have that $z_{k+1} = z_k + \beta_k x_{k+1} - \beta_k \Pi_{\mathcal{K}}(\beta_k^{-1}z_k + x_{k+1})$. Proceeding component-wise, if $(\beta_k^{-1} z_{k,i} + x_{k+1,i}) \in (l_i,u_i)$, then $z_{k+1,i} = 0$, i.e. 
\begin{equation*}
0=z_{k+1,i} \in \partial \delta_{[l_i,u_i]}\big(x_{k+1,i} + \beta_k^{-1}(z_{k,i} + z_{k+1,i})\big) = \partial \delta_{[l_i,u_i]}\big(\beta_k^{-1}z_{k,i} + x_{k+1,i}\big).
\end{equation*}
\noindent If $(\beta_k^{-1} z_{k,i} + x_{k+1,i}) \leq l_i$, then $z_{k+1,i} \leq 0$, and from the previous characterization we obtain that $z_{k+1,i} \in \partial\delta_{\mathcal{K}}(l_i\big)_i$. Finally, if $(\beta_k^{-1} z_{k,i} + x_{k+1,i}) \geq u_i$, we obtain that $z_{k+1,i} \geq 0$ and thus $z_{k+1,i} \in \partial\delta_{\mathcal{K}}(u_i\big)_i$. This shows that \eqref{connection of PPM with PPA eqn} holds. Next, by appropriately re-arranging \eqref{connection of PPM with PPA eqn} we obtain
\[\left(x_{k+1}, y_{k+1}, z_{k+1}\right)= P_k \left( R_k^{-1}\left(\hat{u},
\hat{v},0\right)  +
\left( x_k,y_k,z_k \right)\right),\]
\noindent where $P_k$ is defined in \eqref{proximal operator}. Subtracting both sides by $P_k(x_k,y_k,z_k)$, taking norms, using the non-expansiveness of $P_k$ (see \eqref{non-expansiveness of P_k}), and noting that $\textnormal{dist}\left(0,F_{\beta_k,\rho_k}\left(x_{k+1},y_{k+1}\right)\right) = \|(\hat{u},\hat{v})\|$, yields \eqref{connection of PMM with PPA error} and concludes the proof.
\end{proof}
\par Now that we have established the connection of Algorithm \ref{primal-dual PMM algorithm} with the proximal point iteration governed by the operator $P_k$ defined in \eqref{proximal operator}, we can directly provide conditions on the error sequence in \eqref{primal-dual PMM main sub-problem}, to guarantee global (possibly linear) and local linear (potentially superlinear) convergence of Algorithm \ref{primal-dual PMM algorithm}. To that end, we will make use of certain results, as reported in \cite[Section 2]{SIAMOpt:Lietal}. Firstly, we provide the global convergence result for the algorithm.
\begin{theorem}
Let Assumption \textnormal{\ref{assumption: solution of the QP}} hold. Let $\{(x_k,y_k,z_k)\}_{k=0}^{\infty}$ be generated by Algorithm \textnormal{\ref{primal-dual PMM algorithm}}. Furthermore, assume that we choose a sequence $\{\epsilon_k\}_{k=0}^{\infty}$ in \eqref{primal-dual PMM main sub-problem}, such that
\begin{equation} \label{eqn: error condition for global convergence}
\epsilon_k \leq \frac{\min\{\sqrt{\tau_k},1\}}{\beta_k}\delta_k,\quad 0 \leq \delta_k,\quad \sum_{k=0}^{\infty} \delta_k < \infty.
\end{equation}
\noindent Then, $\{(x_k,y_k,z_k)\}_{k=0}^{\infty}$ is bounded and converges to a primal-dual solution of \eqref{primal problem}--\eqref{dual problem}.
\end{theorem}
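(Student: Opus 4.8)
The plan is to recognize Algorithm \ref{primal-dual PMM algorithm}, via Proposition \ref{proposition: connection to PMM}, as an inexact proximal point iteration governed by the (varying-metric) operator $P_k$ of \eqref{proximal operator}, and to run the classical Rockafellar-type analysis in this setting. Write $p_k \coloneqq (x_k,y_k,z_k)$, and recall that $T_\ell^{-1}(0)\neq\emptyset$ by Assumption \ref{assumption: solution of the QP}; fix $p^*\in T_\ell^{-1}(0)$, so that $P_k(p^*) = p^*$ for all $k$. The first step is to convert the stopping test \eqref{primal-dual PMM main sub-problem} together with the error condition \eqref{eqn: error condition for global convergence} into the bound
\[
\|p_{k+1}-P_k(p_k)\|_{R_k} \;\le\; \frac{\beta_k}{\min\{\sqrt{\tau_k},1\}}\,\textnormal{dist}\big(0,F_{\beta_k,\rho_k}(x_{k+1},y_{k+1})\big)\;\le\;\frac{\beta_k}{\min\{\sqrt{\tau_k},1\}}\,\epsilon_k \;\le\;\delta_k,
\]
using Proposition \ref{proposition: connection to PMM}, so that $p_{k+1}$ is a $\delta_k$-approximate exact proximal step in the $R_k$-norm, with $\sum_k\delta_k<\infty$.

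Next I would establish a Fej\'er-type estimate, taking care of the fact that the metric $R_k$ changes with $k$. Since $\{\tau_k\}$ is non-increasing we have $R_{k+1}\preceq R_k$, hence $\|v\|_{R_{k+1}}\le\|v\|_{R_k}$ for every $v$; combining this with the triangle inequality, the previous bound, the non-expansiveness \eqref{non-expansiveness of P_k}, and $P_k(p^*)=p^*$ gives
\[
\|p_{k+1}-p^*\|_{R_{k+1}}\;\le\;\|p_{k+1}-p^*\|_{R_k}\;\le\;\|P_k(p_k)-p^*\|_{R_k}+\delta_k\;\le\;\|p_k-p^*\|_{R_k}+\delta_k .
\]
Thus $a_k\coloneqq\|p_k-p^*\|_{R_k}$ satisfies $a_{k+1}\le a_k+\delta_k$, so $a_k+\sum_{j\ge k}\delta_j$ is non-increasing and $\{a_k\}$ converges; in particular $\{a_k\}$ is bounded, and since $\|\cdot\|_{R_k}\ge\sqrt{\min\{\tau_\infty,1\}}\,\|\cdot\|$ uniformly in $k$, the sequence $\{p_k\}$ is bounded.

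It remains to show that every limit point of $\{p_k\}$ solves \eqref{primal problem}--\eqref{dual problem}. The operator $P_k$ is the resolvent of the maximal monotone (in the $R_k$-geometry) operator $\beta_k R_k^{-1}T_\ell$, hence firmly non-expansive in $\|\cdot\|_{R_k}$, which yields $\|P_k(p_k)-p_k\|_{R_k}^2\le\|p_k-p^*\|_{R_k}^2-\|P_k(p_k)-p^*\|_{R_k}^2$. Feeding in the displayed bounds and boundedness of $\{p_k\}$ gives, after a short computation, $\|P_k(p_k)-p_k\|_{R_k}^2\le a_k^2-a_{k+1}^2+2M\delta_k$ with $M\coloneqq\sup_k a_k<\infty$; summing and using $\sum_k\delta_k<\infty$ shows $\sum_k\|P_k(p_k)-p_k\|_{R_k}^2<\infty$, so $\|P_k(p_k)-p_k\|\to0$ and therefore $\|p_{k+1}-p_k\|\to0$. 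Now let $p_{k_j}\to\bar p$; then $p_{k_j+1}\to\bar p$ as well, and the inclusion \eqref{connection of PPM with PPA eqn} from the proof of Proposition \ref{proposition: connection to PMM} reads
\[
\begin{bmatrix}\hat u_j\\ \hat v_j\\ 0\end{bmatrix}+\begin{bmatrix}\rho_{k_j}^{-1}(x_{k_j}-x_{k_j+1})\\ \beta_{k_j}^{-1}(y_{k_j}-y_{k_j+1})\\ \beta_{k_j}^{-1}(z_{k_j}-z_{k_j+1})\end{bmatrix}\in T_\ell(x_{k_j+1},y_{k_j+1},z_{k_j+1}),\qquad \|(\hat u_j,\hat v_j)\|\le\epsilon_{k_j}\to0.
\]
Because $\{\beta_k\}$ is non-decreasing and $\rho_k=\beta_k/\tau_k$ is non-decreasing, the coefficients $\beta_{k_j}^{-1}$ and $\rho_{k_j}^{-1}$ are bounded, so the left-hand side tends to $0$; since the graph of the maximal monotone (indeed polyhedral) operator $T_\ell$ is closed, $0\in T_\ell(\bar p)$, i.e. $\bar p$ is a primal-dual solution.

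Finally, I would upgrade this to full convergence: applying the Fej\'er estimate of the second paragraph with $p^*$ replaced by $\bar p\in T_\ell^{-1}(0)$ shows that $\|p_k-\bar p\|_{R_k}$ converges; as it converges to $0$ along the subsequence $\{k_j\}$, it converges to $0$, and hence $p_k\to\bar p$ by uniform equivalence of the norms $\|\cdot\|_{R_k}$. The only genuinely delicate points are the bookkeeping through the $k$-dependent metrics $R_k$ — resolved cleanly by the monotonicity $R_{k+1}\preceq R_k$ coming from $\{\tau_k\}$ being non-increasing — and the verification that the perturbation terms in \eqref{connection of PPM with PPA eqn} vanish along convergent subsequences, for which the relevant facts are $\beta_k\ge\beta_0>0$, $\rho_k\ge\rho_0>0$, and $\|p_{k+1}-p_k\|\to0$; the rest follows the template of the inexact proximal point method as developed in \cite[Section 2]{SIAMOpt:Lietal} and \cite[Sections 9--10]{arXiv:ClasValk} (cf. also \cite{MathOpRes:Rock,SIAMJCO:Rock}).
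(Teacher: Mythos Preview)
Your proposal is correct and follows precisely the Rockafellar-type inexact proximal point argument that the paper's own proof defers to by citing \cite[Theorem 2.3]{SIAMOpt:Lietal} and \cite[Theorem 1]{SIAMJCO:Rock}; the handling of the varying metrics via $R_{k+1}\preceq R_k$ (from $\{\tau_k\}$ non-increasing) and the use of firm non-expansiveness of $P_k$ in $\|\cdot\|_{R_k}$ are exactly the ingredients those references supply. The only cosmetic slip is that the ``short computation'' should read $\|P_k(p_k)-p_k\|_{R_k}^2\le a_k^2-a_{k+1}^2+2M\delta_k+\delta_k^2$, which of course does not affect summability.
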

\begin{proof}
\noindent The proof is omitted since it is a direct application of \cite[Theorem 2.3]{SIAMOpt:Lietal}. 
\end{proof}
\par Next, we discuss local linear (and potentially superlinear) convergence of Algorithm \ref{primal-dual PMM algorithm}. To that end, let $r > \sum_{k = 0}^{\infty} \delta_k$, where $\delta_k$ is defined in \eqref{eqn: error condition for global convergence}. Then, from Lemma \ref{lemma: error condition for polyhedral multifucntion} we know that there exists $\kappa > 0$ associated with $r$ such that
\begin{equation} \label{eqn: specific kappa and r for error condition}
 \textnormal{dist}\big((x,y,z),T_{\ell}^{-1}(0)\big) \leq \kappa\ \textnormal{dist}\big(0,T_{\ell}(x,y,z)\big),
 \end{equation}
\noindent for all $(x,y,z) \in \mathbb{R}^{2n+m}$ such that $\textnormal{dist}\big((x,y,z),T_{\ell}^{-1}(0)\big) \leq r$. 
\begin{theorem} \label{thm:local convergence} Let Assumption \textnormal{\ref{assumption: solution of the QP}} hold, and assume that $(x_0,y_0,z_0)$ is chosen to satisfy \[\textnormal{dist}_{R_0}\big((x_0,y_0,z_0),T_{\ell}^{-1}(0)\big) \leq r - \sum_{k = 0}^{\infty} \delta_k,\]
\noindent where $\{\delta_k\}_{k=0}^{\infty}$ is given in \eqref{eqn: error condition for global convergence}. Let also $\kappa$ be given as in \eqref{eqn: specific kappa and r for error condition} and assume that we choose a sequence $\{\epsilon_k\}_{k=0}^{\infty}$ in \eqref{primal-dual PMM main sub-problem} such that
\begin{equation} \label{eqn: error condition for superlinear convergence}
\epsilon_k \leq \frac{\min\{\sqrt{\tau_k},1\}}{\beta_k} \min\big\{\delta_k, \delta_k' \|(x_{k+1},y_{k+1},z_{k+1}) - (x_k,y_k,z_k)\|_{R_k}\big\},
\end{equation}
\noindent where $0 \leq \delta_k$, $\sum_{k=0}^{\infty} \delta_k < \infty$, and $0 \leq \delta_k' < 1$, $\sum_{k = 0}^{\infty} \delta_k' < \infty$. Then, for all $k \geq 0$ we have that
\begin{equation} \label{eqn: superlinear convergence error decay}
\begin{split}
\textnormal{dist}_{R_{k+1}}\left((x_{k+1},y_{k+1},z_{k+1},T_{\ell}^{-1}(0)\right) & \leq \mu_k \textnormal{dist}_{R_k}\left( x_{k},y_{k},z_{k},T_{\ell}^{-1}(0)\right),
\end{split}
\end{equation}
\[\mu_k \coloneqq (1-\delta_k')^{-1}\left(\delta_k' + (1+\delta_k')\frac{\kappa \gamma_k}{\sqrt{\beta_k^2 + \kappa^2\gamma_k^2}}\right), \qquad \lim_{k\rightarrow \infty} \mu_k = \mu_{\infty} \coloneqq \frac{\kappa \gamma_{\infty}}{\sqrt{\beta_{\infty}^2 + \kappa^2\gamma_{\infty}^2}},\]
\noindent where $\gamma_k \coloneqq \max\{\tau_k,1\}$ and $\gamma_{\infty} = \max\{\tau_{\infty},1\}$ (noting that $\mu_{\infty} = 0,\ \textnormal{if }\beta_{\infty} = \infty$).
\end{theorem}
\begin{proof}
\noindent The proof is omitted since it follows by direct application of \cite[Theorem 2.5]{SIAMOpt:Lietal} (see also \cite[Theorem 2]{SIAMJCO:Rock}).
\end{proof}
\begin{remark}
Following \textnormal{\cite[Remarks 2, 3]{SIAMOpt:Lietal}}, we can choose a non-increasing sequence $\{\delta_k'\}_{k=0}^{\infty}$ and a large enough $\beta_0$ such that $\mu_0 < 1$, which in turn implies that $\mu_k \leq \mu_0 < 1$, yielding a global linear convergence of both $\textnormal{dist}\big((x_{k},y_k,z_k),T_{\ell}^{-1}(0)\big)$ as well as $\textnormal{dist}_{R_k}\big((x_{k},y_k,z_k),T_{\ell}^{-1}(0)\big)$, assuming that the starting point of the algorithm satisfies the assumption stated in Theorem \textnormal{\ref{thm:local convergence}}. On the other hand, as is implicitly mentioned in Theorem \textnormal{\ref{thm:local convergence}}, if $\beta_k$ is forced to increase indefinitely, we obtain a local superlinear convergence rate (notice that $\mu_{\infty} = 0$ if $\beta_{\infty} = \infty$).
\end{remark}

\section{Semismooth Newton method} \label{sec: SSN method}
\par In this section we briefly present a standard semismooth Newton (SSN) scheme suitable for the solution of problem \eqref{primal-dual PMM main sub-problem}, appearing in Algorithm \ref{primal-dual PMM algorithm}. 
More specifically, given the estimates $(x_k,y_k,z_k)$ as well as the penalty parameters $\beta_k,\ \rho_k$, we apply SSN to approximately solve \eqref{PD-SSN-PMM Proximal Optimality Conditions}. Given any $\zeta_k > 0$, the optimality conditions in \eqref{PD-SSN-PMM Proximal Optimality Conditions} can equivalently be written as
\begin{equation} \label{PD-SSN-PMM Proximal Optimality Conditions-SMOOTH}
\widehat{F}_{\beta_k,\rho_k,\zeta_k}\left(x,y\right) \coloneqq \begin{bmatrix} x  - \textbf{prox}_{\zeta_k g}\left(x - \zeta_k   r_{\beta_k,\rho_k}\left(x,y\right) \right) \\
\zeta_k\left(Ax + \beta_k^{-1} \left(y-y_k\right) -b\right)
\end{bmatrix}  = \begin{bmatrix}
0\\
0
\end{bmatrix},
\end{equation}
\noindent which follows from the properties of the $\textbf{prox}_{\zeta_k g}(\cdot)$ operator. We set $x_{k_0} = x_k$, $y_{k_0} = y_k$, and at every iteration $j$ of SSN, we approximately solve a system of the following form:
\begin{equation} \label{Primal-dual SSN system}
M_{k_j}\begin{bmatrix}
d_x\\
d_y
\end{bmatrix}= - \widehat{F}_{\beta_k,\rho_k,\zeta_k}\left(x_{k_j},y_{k_j}\right),
\end{equation}
\noindent where $M_{k_j} \in \mathscr{M}_{k_j}$, with
\begin{equation} \label{Clarke SSN matrices}
\begin{split}
\mathscr{M}_{k_j}   \coloneqq \Bigg\{   &\ M = \begin{bsmallmatrix} M_1 & M_2  \\ \zeta_kA & \zeta_k\beta_k^{-1}I_m \end{bsmallmatrix} \in \mathbb{R}^{(n+m)\times (n+m)} \colon  \widehat{u}_{k_j} = x_{k_j} - \zeta_k r_{\beta_k,\rho_k}\left(x_{k_j},y_{k_j}\right)
 ,\\
&\ H\left(x_{k_j},y_{k_j}\right) \in \partial_x^C\left(r_{\beta_k,\rho_k}\left(x_{k_j},y_{k_j}\right)\right),\quad \widehat{B}_{k_j}\left(\widehat{u}_{k_j}\right) \in \partial_x^C\left(\textbf{prox}_{\zeta_k g}\left(\widehat{u}_{k_j}\right)\right), \\
&\ M_1 = \left(I-\widehat{B}_{k_j}(\widehat{u}_{k_j})\right) + \zeta_k \widehat{B}_{k_j}\left(\widehat{u}_{k_j}\right)H\left(x_{k_j},y_{k_j}\right)\quad M_2 = -\zeta_k\widehat{B}_{k_j}\left(\widehat{u}_{k_j}\right)A^\top,\quad\Bigg\}.
\end{split}
\end{equation}
\noindent The symbol $\partial_x^C(\cdot)$ denotes the \emph{Clarke subdifferential} of a function (see \cite{JWS:Clarke}) with respect to $x$, which can be obtained as the convex hull of the \emph{Bouligand subdifferential} (\cite{JWS:Clarke}). Any element of the Clarke subdifferential is a \emph{Newton derivative} (see \cite[Chapter 13]{arXiv:ClasValk}), since $r_{\beta_k,\rho_k}(\cdot,y)$ and $g(\cdot)$ are \emph{piecewise continuously differentiable} and \emph{regular functions}. Using \cite[Theorem 14.7]{arXiv:ClasValk}, we obtain that for any $i \in \{1,\ldots,n\}$:
\begin{equation*}
\partial_{w_i}^C \left(\Pi_{[l_i,u_i]}(w_i)\right)= \begin{cases} 
\{1\}, &\qquad \textnormal{if}\quad w_i \in (l_i,u_i),\\
\{0\}, &\qquad \textnormal{if}\quad w_i \notin [l_i,u_i],\\
[0,1], &\qquad \textnormal{if}\quad w_i \in \{l_i,u_i\}.
\end{cases}
\end{equation*}
\noindent Furthermore, since $g(x) = \|Dx\|_1$, where $D$ positive semi-definite and diagonal, we have
\begin{equation*}
\left( \textbf{prox}_{\zeta_k g}\left(w\right)\right)_i = \max \Big\{ \left|w_i\right|- \zeta_k D_{(i,i)}, 0 \Big\} \textnormal{sign}(w_i),
\end{equation*}
\noindent where $\textnormal{sign}(\cdot)$ represents the sign of a scalar, and
\begin{equation*}
\left(\partial_{w}^C \left(\textbf{prox}_{\zeta_k g}\left(w\right)\right)\right)_i= \begin{cases} 
\{1\}, &\qquad \textnormal{if}\quad \left| w_i \right| > \zeta_k D_{(i,i)},\ \textnormal{or}\quad D_{(i,i)} = 0,\\
\{0\}, &\qquad \textnormal{if}\quad \left| w_i \right| < \zeta_k D_{(i,i)},\\
[0,1], &\qquad \textnormal{if}\quad \left| w_i \right| = \zeta_k D_{(i,i)}.
\end{cases}
\end{equation*}
\par For computational as well as theoretical reasons, we always choose matrices $M_{k_j}$ from the Bouligand subdifferential. The computational reasons for this choice will become apparent in the following subsection. On the other hand, it is well-known (see \cite[Theorem 4]{CAM:MartQi}) that an inexact semismooth Newton scheme using the Bouligand subdifferential converges at a local linear rate (assuming that the linear systems are solved up to an appropriate accuracy), if the equation in \eqref{PD-SSN-PMM Proximal Optimality Conditions-SMOOTH} is \emph{BD-regular} at the optimum $(x_k^*,y_k^*)$ (that is, each element of the Bouligand subdifferential of $\widehat{F}_{\beta_k,\rho_k,\zeta_k}(x_k^*,y_k^*)$ is nonsingular). Note, however, that since we employ the semismooth Newton scheme to solve the sub-problems arising from Algorithm \ref{primal-dual PMM algorithm}, we obtain that the resulting nonsmooth equations are indeed BD-regular for every outer iteration $k \geq 0$. Thus, assuming that the associated linear systems are solved up to a sufficient accuracy (see \cite[Theorem 4]{CAM:MartQi}), we obtain local linear convergence rate of the resulting inexact semismooth Newton scheme. If, additionally, the solution accuracy of the associated linear systems is increased at a suitable rate, the resulting local rate can be superlinear. 
\par We complete the derivation of the SSN by applying backtracking line-search on an appropriate primal-dual merit function. Then, under additional regularity assumptions one can show that SSN is globally convergent. To that end, we write the resulting primal-dual sub-problem as an $\ell_1$-regularized convex instance, by using a generalized primal-dual augmented Lagrangian merit function (e.g. see \cite{COAP:GillRobi}), i.e.
\[\hat{\psi}(x,y) = \hat{\phi}(x,y) + g(x), \qquad \hat{\phi}(x,y) \coloneqq \phi(x) + \frac{\beta_k}{2}\|Ax + \beta_k^{-1}(y-y_k) - b\|^2, \]
\noindent and the SSN sub-problem can be expressed as $\min_{x,y} \hat{\psi}(x,y)$. If $g(x) = 0$, then this smooth primal-dual merit function can be used to globalize the SSN, without any additional assumptions. For properties as well as an analysis of this merit function, we refer the reader to \cite{COAP:GillRobi}. In the nonsmooth case we have to resort to a different globalization strategy. Here we use the following merit function to globalize the SSN:
\begin{equation} \label{merit function for SSN globalization}
\Theta(x,y) \coloneqq  \left\|\widehat{F}_{\beta_k,\rho_k,\zeta_k}\left(x_{k_j},y_{k_j}\right)\right\|^{2}.
\end{equation}
\noindent This function is very often employed when globalizing SSN schemes applied to nonsmooth equations of the form of \eqref{PD-SSN-PMM Proximal Optimality Conditions-SMOOTH} (also known as the \emph{natural map}) by means of line-search. Indeed, its directional derivatives can be computed easily, assuming that the Bouligand subdifferential is exploited (see for example the analyses in \cite{MathOR:Han,InverseProbs:HansRaasch,CAM:MartQi} and the references therein).  Algorithm \ref{primal-dual SNM algorithm} outlines a semismooth Newton method for the approximate solution of \eqref{primal-dual PMM main sub-problem}. We assume that the associated linear systems are approximately solved by means of a Krylov subspace method. An analysis of the effect of errors arising from the use of Krylov methods within SSN applied to nonsmooth equations can be found in \cite{NLAA:ChenQi}. 
\renewcommand{\thealgorithm}{SSN}
\begin{algorithm}[!ht]
\caption{Semismooth Newton method}
    \label{primal-dual SNM algorithm}
    \textbf{Input:} $\epsilon_k > 0$, $\mu \in \left(0,\frac{1}{2}\right)$, $\delta \in (0,1)$, $\zeta_k > 0$, $\{\eta_j\}_{j = 0}^{\infty}$, $\eta_j \in (0,1)$, $x_{k_0} = x_k$,  $y_{k_0} = y_k$.
\begin{algorithmic}
\For {($j = 0,1,2,\ldots$)}
\State Choose $M_{k_j} \in \mathscr{M}_{k_j}$, where $\mathscr{M}_{k_j}$ is defined in \eqref{Clarke SSN matrices}, and solve
\[M_{k_j} \begin{bsmallmatrix}d_x\\d_y \end{bsmallmatrix} \approx - \widehat{F}_{\beta_k,\rho_k,\zeta_k}\left(x_{k_j},y_{k_j}\right),\]
\State such that $\left\| M_{k_j}\begin{bsmallmatrix}d_x\\d_y \end{bsmallmatrix} + \widehat{F}_{\beta_k,\rho_k,\zeta_k}\left(x_{k_j},y_{k_j}\right)\right\| \leq \eta_j \left\|\widehat{F}_{\beta_k,\rho_k,\zeta_k}\left(x_{k_j},y_{k_j}\right)\right\|.$ 
\State (Line-search) Set $\alpha_j = \delta^{m_j}$, where $m_j$ is the first non-negative integer for which:
\[\Theta\left(x_{k_j} + \delta^{m_j}d_x, y_{k_j} + \delta^{m_j} d_y\right) \leq \left(1 - 2 \mu \delta^{m_j}\right) \Theta\left(x_{k_j},y_{k_j}\right)\]
\State $x_{k_{j+1}} = x_{k_j} + \alpha_j d,\quad y_{k_{j+1}} = y_{k_j} + \alpha_j d_y$.
\If {$\left(\textnormal{dist}\left(0,F_{\beta_k,\rho_k}\left(x_{k_j},y_{k_j}\right)\right) \leq \epsilon_k\right)$}
\State \Return $(x_{k_{j+1}},y_{k_{j+1}})$.
\EndIf
\EndFor
\end{algorithmic}
\end{algorithm}
\par If $\eta_j$ is bounded above by an appropriately small number $\eta \in (0,1)$, then Algorithm \ref{primal-dual SNM algorithm} (assuming full-steps) is locally Q-linearly convergent (see \cite[Theorem 3]{CAM:MartQi}). Furthermore, if $\eta_j \rightarrow 0$, local superlinear convergence of Algorithm \ref{primal-dual SNM algorithm} follows directly from \cite[Theorem 4]{CAM:MartQi}. Similar analyses have been given in the literature in \cite{NLAA:ChenQi,InverseProbs:HansRaasch,MathOR:Qi}. Additionally, if the conditions outlined in \cite[(A1)--(A4)]{CAM:MartQi} hold (noting that in our case (A1) and (A3) hold automatically), then Algorithm \ref{primal-dual SNM algorithm} can be shown to be globally convergent. In particular, if we assume that the directional derivative of \eqref{merit function for SSN globalization} is continuous at the optimal point (see \cite[Equation (40)]{InverseProbs:HansRaasch}), we can mirror the analysis in \cite[Theorem 4.8]{InverseProbs:HansRaasch} to obtain global convergence of Algorithm \ref{primal-dual SNM algorithm}. This is omitted here, and the reader is referred to the analyses in \cite{InverseProbs:HansRaasch,CAM:MartQi} for additional details.
\par At this point we should mention certain alternatives to the merit function given in \eqref{merit function for SSN globalization}. There has been an extensive literature on the globalization of semismooth Newton methods for the solution of nonsmooth equations. Indeed, there have been developed approaches based on trust-region strategies (e.g. see \cite{SIAMOpt:Christofetal,MathProg:Dennis_etal,OptEng:MannelRund}), as well as line-search strategies based on smooth penalty functions (e.g. see the developments on the forward-backward envelope (FBE) \cite{IEEE_DC:Patrinos_etal,COAP:Stella_etal} or developments based on the proximal point method \cite{IEEE_CDC:Dhingra_etal}). In particular, line-search strategies based on the forward-backward envelope can be shown to yield globally convergent SSN schemes in our case without any additional assumptions. However, we have chosen to employ \eqref{merit function for SSN globalization} based on numerical considerations. Indeed, the natural map provides an active-set interpretation of Algorithm \ref{primal-dual SNM algorithm}, and contributes to its computational and memory efficiency. Additionally, \eqref{merit function for SSN globalization} is cheap to evaluate, and Algorithm \ref{primal-dual SNM algorithm} performs reliably well for all the problems studied in this paper.
\subsection{The SSN linear systems}
\par The major bottleneck of the previously presented inner-outer scheme is the approximate solution of the associated linear systems in \eqref{Primal-dual SSN system}. Since Algorithm \ref{primal-dual SNM algorithm} does not require an exact solution, we can utilize preconditioned Krylov subspace solvers for the efficient solution of such systems.
\par Let $k,\ j \geq 0$ be some arbitrary iterations of Algorithm \ref{primal-dual PMM algorithm}, and \ref{primal-dual SNM algorithm}, respectively. We notice that any element $B_{k_j} \in \partial_x^C\left(\Pi_{\mathcal{K}}\left(\beta_k^{-1}z_{k} + x_{k_j}\right)\right)$ yields a Newton derivative (see \cite[Theorem 14.8]{arXiv:ClasValk}). The same applies for any $\widehat{B}_{k_j} \in \partial_x^C \left( \textbf{prox}_{\zeta_k g}\left( \widehat{u}_{k_j} \right)\right)$, where $\widehat{u}_{k_j} = x_{k_j} - \zeta_k r_{\beta_k,\rho_k}(x_{k_j},y_{k_j})$. Thus, we choose $B_{k_j},\ \widehat{B}_{k_j}$ from the Bouligand subdifferential to improve computational efficiency. We set $B_{k_j},\ \widehat{B}_{k_j}$ as diagonal matrices with
\begin{equation}\label{eqn: Clarke subdifferential of projection choice}
\begin{split}
B_{k_j,(i,i)} \coloneqq & \begin{cases} 
1, &\quad \textnormal{if}\  \beta_k^{-1}z_{k,i} + x_{k_j,i} \in (a_{l_i},a_{u_i}),\\
0, &\quad  \textnormal{otherwise}, 
\end{cases},\\  \widehat{B}_{k_j,(i,i)} \coloneqq & \begin{cases} 
1, &\quad \textnormal{if}\  \left| \widehat{u}_{k_j}\right| > \zeta_k D_{(i,i)},\ \textnormal{or}\  D_{(i,i)} = 0,\\
0, &\quad  \textnormal{otherwise},
\end{cases} 
\end{split}
\end{equation}
\noindent for $i \in \{1,\ldots,n\}$, where $\widehat{u}_{k_j}$ is defined in \eqref{Clarke SSN matrices}. We can now explicitly write \eqref{Primal-dual SSN system}, for inner-outer iteration $k_j$, in the following saddle-point form
\begin{equation} \label{eqn: explicit SSN linear system}
\underbrace{\begin{bmatrix}-G_{k_j} &\zeta_k\widehat{B}_{k_j} A^\top\\
\zeta_k A & \zeta_k\beta_k^{-1} I_m
\end{bmatrix}}_{M_{k_j}}\begin{bmatrix}
d_x\\
d_y
\end{bmatrix} = \begin{bmatrix}
x_{k_j} - \textbf{prox}_{\zeta_k g}\left(\widehat{u}_{k_j}\right)\\
\zeta_k \left(b - Ax_{k_j} - \beta_k^{-1}\left(y_{k_j}-y_k\right) \right)
\end{bmatrix},
\end{equation}
\noindent where $G_{k_j} \coloneqq \left(I_n - \widehat{B}_{k_j}\right) + \zeta_k \widehat{B}_{k_j}H_{k_j},\  H_{k_j} \coloneqq Q + \left(\beta_k + \rho_k^{-1}\right)I_n - \beta_k B_{k_j}.$ Driven from \eqref{eqn: Clarke subdifferential of projection choice}, we define two index sets $\widehat{\mathcal{B}}_j \coloneqq \left \{ i \in \{1,\ldots,n\} \colon \widehat{B}_{k_j} = 1 \right\},\  \widehat{\mathcal{N}}_j \coloneqq \{1,\ldots,n\}\setminus \widehat{\mathcal{B}}_j.$ Observe that $ A \widehat{B}_{k_j}  =  \left[ A_{\widehat{\mathcal{B}}_j}\ 0 \right]\mathscr{P}^\top$, where $\mathscr{P}$ is an appropriate permutation matrix. Then, we can write \eqref{eqn: explicit SSN linear system} as
\begin{equation*} 
\underbrace{\begin{bmatrix}-\zeta_k H_{k_j,\left(\widehat{\mathcal{B}}_j,\widehat{\mathcal{B}}_j\right)} &- \zeta_k H_{k_j,\left(\widehat{\mathcal{B}}_j,\widehat{\mathcal{N}}_j\right)}& \zeta_kA_{\widehat{\mathcal{B}}_j}^\top\\
0 & -I_{\left| \widehat{\mathcal{N}}_j\right|} & 0\\
\zeta_k A_{\widehat{\mathcal{B}}_j} & \zeta_k A_{\widehat{\mathcal{N}}_j} & \zeta_k\beta_k^{-1} I_m
\end{bmatrix}}_{\left[\mathscr{P}^\top\ I_m \right] M_{k_j}\begin{bmatrix}
\mathscr{P}\\ I_m
\end{bmatrix}} \begin{bmatrix}
d_{x,\widehat{\mathcal{B}}_j}\\
d_{x,\widehat{\mathcal{N}}_j}\\
d_y
\end{bmatrix} = \left[\mathscr{P}^\top\ -I_m \right] \widehat{F}_{\beta_k,\rho_k,\zeta_k}\left(x_{k_j},y_{k_j}\right).
\end{equation*}
\noindent From the second block equation we obtain $d_{x,\widehat{\mathcal{N}}_j} = -\left(x_{k_j} - \textbf{prox}_{\zeta_k g}\left( \widehat{u}_{k_j}\right)\right)_{\widehat{\mathcal{N}}_j}.$ Thus, system \eqref{eqn: explicit SSN linear system} is reduced to the following symmetric saddle-point system
\begin{equation} \label{eqn: detailed explicit SSN linear system}
\underbrace{\begin{bmatrix}- H_{k_j,\left(\widehat{\mathcal{B}}_j,\widehat{\mathcal{B}}_j\right)} & A_{\widehat{\mathcal{B}}_j}^\top\\
 A_{\widehat{\mathcal{B}}_j}  & \beta_k^{-1} I_m
\end{bmatrix}}_{\widehat{M}_{k_j}} \begin{bmatrix}
d_{x,\widehat{\mathcal{B}}_j}\\
d_y
\end{bmatrix} = \begin{bmatrix}
\zeta_k^{-1}\left(x_{k_j} - \textbf{prox}_{\zeta_k g} \left(\widehat{u}_{k_j}\right)\right)_{\widehat{\mathcal{B}}_j} +  H_{k_j,\left(\widehat{\mathcal{B}}_j,\widehat{\mathcal{N}}_j\right)} d_{x,\widehat{\mathcal{N}}_j}\\
 \left(b - Ax_{k_j} - \beta_k^{-1}\left(y_{k_j}-y_k \right) - A_{\widehat{\mathcal{N}}_j} d_{x,\widehat{\mathcal{N}}_j}\right)
\end{bmatrix},
\end{equation}
\noindent the coefficient matrix of which is \emph{quasi-definite} (see \cite{SIAMOpt:Vander}), invertible, and its conditioning can be directly controlled by the regularization parameters $(\beta_k,\rho_k)$ of Algorithm \ref{primal-dual PMM algorithm}.
\subsection{Preconditioning and iterative solution of the linear systems}
\par Next, we would like to construct an effective preconditioner for $\widehat{M}_{k_j}$. To that end, we define
\begin{equation} \label{eqn: preconditioner for explicit SSN linear system}
\widetilde{M}_{k_{j}} \coloneqq \begin{bmatrix}
 \widetilde{H}_{k_{j},\left(\widehat{\mathcal{B}}_j,\widehat{\mathcal{B}}_j\right)} & 0 \\
0 &  \left(A_{\widehat{\mathcal{B}}_j} E_{k_{j}} A_{\widehat{\mathcal{B}}_j}^\top + \beta_k^{-1} I_m\right)
\end{bmatrix},
\end{equation}
\noindent with $\widetilde{H}_{k_{j},\left(\widehat{\mathcal{B}}_j,\widehat{\mathcal{B}}_j\right)} \coloneqq \textnormal{Diag}\left(H_{k_{j},\left(\widehat{\mathcal{B}}_j,\widehat{\mathcal{B}}_j\right)}\right)$ and $E_{k_{j}} \in \mathbb{R}^{\left|\widehat{\mathcal{B}}_j  \right| \times \left|\widehat{\mathcal{B}}_j  \right|}$ the diagonal matrix defined as
\begin{equation} \label{eqn: preconditioner dropping matrix E}
E_{k_{j},(i,i)} \coloneqq  \begin{cases} 
\widetilde{H}_{k_{j},\left(\widehat{\mathcal{B}}_{j,i},\widehat{\mathcal{B}}_{j,i}\right)}^{-1}, &\qquad \textnormal{if}\quad \beta_k^{-1}z_{k,\widehat{\mathcal{B}}_{j,i}} + x_{k_{j},\widehat{\mathcal{B}}_{j,i}} \in (l_{\widehat{\mathcal{B}}_{j,i}},u_{\widehat{\mathcal{B}}_{j,i}}),\\
0, &\qquad  \textnormal{otherwise},
\end{cases}
\end{equation}
\noindent where $\widehat{\mathcal{B}}_{j,i}$ denotes the $i$-th index of this index set, following the order imposed by $\mathscr{P}$.
\par The preconditioner in \eqref{eqn: preconditioner for explicit SSN linear system} is an extension of the preconditioner proposed in \cite{NLAA:BergGondMartPearPoug} for the solution of linear systems arising from the application of a regularized interior point method to convex quadratic programming. Being a diagonal matrix, $E_{k_{j}}$ yields a sparse approximation of the Schur complement of the saddle-point matrix in \eqref{eqn: preconditioner for explicit SSN linear system}. This approximation is then used to construct a positive definite block-diagonal preconditioner (i.e. $\widetilde{M}_{k_{j}}$), which can be used within a symmetric Krylov solver, like the minimum residual (MINRES) method (see \cite{PaigeSaundersSIAMNumAnal}). The $(2,2)$ block of $\widetilde{M}_{k_j}$ can be inverted via a Cholesky decomposition.
\par Next, we analyze the spectral properties of the preconditioned matrix $(\widetilde{M}_{k_{j}})^{-1}\widehat{M}_{k_j}$. Let 
\[\widehat{S}_{k_j} \coloneqq \left(A_{\widehat{\mathcal{B}}_j}  \widetilde{H}_{k_j,\left(\widehat{\mathcal{B}}_j,\widehat{\mathcal{B}}_j\right)}^{-1} A_{\widehat{\mathcal{B}}_j}^\top + \beta_k^{-1} I_m\right),\qquad \widetilde{S}_{k_j} \coloneqq \left(A_{\widehat{\mathcal{B}}_j} E_{k_{j}} A_{\widehat{\mathcal{B}}_j}^\top + \beta_k^{-1} I_m\right). \]
\noindent In the following lemma, we bound the eigenvalues of the preconditioned matrix $\widetilde{S}_{k_j}^{-1}\widehat{S}_{k_j}$. This is subsequently used to analyze the spectrum of $\widetilde{M}_{k_j}^{-1}\widehat{M}_{k_j}$.
\begin{lemma} \label{lemma: spectral properties of approximate preconditioned Schur complement}
For any iterates $k$ and $j$ of Algorithms \textnormal{\ref{primal-dual PMM algorithm}} and \textnormal{\ref{primal-dual SNM algorithm}}, respectively, we have
\[1 \leq \lambda \leq  1+\sigma_{\max}^2(A)\left(\frac{1}{1+\beta^{-2}_{\infty}\tau_{\infty}}\right),\]
\noindent where $\lambda \in \lambda\left( \widetilde{S}_{k_j}^{-1}\widehat{S}_{k_j}\right),$ and $\beta_{\infty},\ \tau_{\infty}$ are defined in Algorithm \textnormal{\ref{primal-dual PMM algorithm}}.
\end{lemma}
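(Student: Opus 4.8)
The plan is to compare $\widehat{S}_{k_j}$ and $\widetilde{S}_{k_j}$ in the Löwner (positive semidefinite) order and then translate the resulting sandwich into eigenvalue bounds via the generalized eigenvalue problem $\widehat{S}_{k_j}v = \lambda \widetilde{S}_{k_j}v$; note that since both matrices are symmetric positive definite (the summand $\beta_k^{-1}I_m$ guarantees this, as $E_{k_j}\succeq 0$ and $\widetilde H_{k_j,(\widehat{\mathcal B}_j,\widehat{\mathcal B}_j)}\succ 0$), the pencil has real positive eigenvalues and $\lambda = (v^\top \widehat S_{k_j} v)/(v^\top \widetilde S_{k_j} v)$. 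First I would observe, by comparing \eqref{eqn: preconditioner dropping matrix E} with \eqref{eqn: Clarke subdifferential of projection choice}, that the diagonal matrix $E_{k_j}$ coincides with $\widetilde H_{k_j,(\widehat{\mathcal B}_j,\widehat{\mathcal B}_j)}^{-1}$ exactly on those indices $i\in\widehat{\mathcal B}_j$ with $B_{k_j,(i,i)}=1$, and vanishes on the remaining indices of $\widehat{\mathcal B}_j$. Writing $A_{\widehat{\mathcal B}_j}$ columnwise as $[\,a_i\,]_{i\in\widehat{\mathcal B}_j}$, this gives
\[
\widehat{S}_{k_j} - \widetilde{S}_{k_j} \;=\; A_{\widehat{\mathcal B}_j}\bigl(\widetilde H_{k_j,(\widehat{\mathcal B}_j,\widehat{\mathcal B}_j)}^{-1} - E_{k_j}\bigr)A_{\widehat{\mathcal B}_j}^\top \;=\; \sum_{i\in\widehat{\mathcal B}_j:\, B_{k_j,(i,i)}=0} \frac{1}{\widetilde H_{k_j,(\widehat{\mathcal B}_{j,i},\widehat{\mathcal B}_{j,i})}}\, a_i a_i^\top \;\succeq\; 0,
\]
each summand being a nonnegative multiple of a rank-one PSD matrix. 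Hence $\widehat{S}_{k_j}\succeq\widetilde{S}_{k_j}$, which yields $\lambda \ge 1$, the lower bound.

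For the upper bound I would use the explicit form $H_{k_j} = Q + (\beta_k + \rho_k^{-1})I_n - \beta_k B_{k_j}$: on every index $i$ appearing in the sum above we have $B_{k_j,(i,i)}=0$, so $\widetilde H_{k_j,(\widehat{\mathcal B}_{j,i},\widehat{\mathcal B}_{j,i})} = Q_{(i,i)} + \beta_k + \rho_k^{-1} \ge \beta_k + \rho_k^{-1}$ because $Q\succeq 0$. Since $\rho_k = \beta_k/\tau_k$, i.e. $\rho_k^{-1} = \tau_k\beta_k^{-1}$, this reads $\widetilde H_{k_j,(\widehat{\mathcal B}_{j,i},\widehat{\mathcal B}_{j,i})} \ge \beta_k\bigl(1+\tau_k\beta_k^{-2}\bigr)$, so each coefficient in the sum is at most $\beta_k^{-1}/(1+\tau_k\beta_k^{-2})$. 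Therefore
\[
\widehat{S}_{k_j} - \widetilde{S}_{k_j} \;\preceq\; \frac{\beta_k^{-1}}{1+\tau_k\beta_k^{-2}}\, A_{\widehat{\mathcal B}_j}A_{\widehat{\mathcal B}_j}^\top \;\preceq\; \frac{\beta_k^{-1}}{1+\tau_k\beta_k^{-2}}\, A A^\top \;\preceq\; \frac{\beta_k^{-1}\,\sigma_{\max}^2(A)}{1+\tau_k\beta_k^{-2}}\, I_m,
\]
where I passed to the full $AA^\top$ (adding PSD rank-one terms) and used $\lambda_{\max}(AA^\top)=\sigma_{\max}^2(A)$. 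Finally $\widetilde{S}_{k_j} \succeq \beta_k^{-1}I_m$, so $\beta_k^{-1}I_m \preceq \widetilde{S}_{k_j}$, and substituting into the last display gives $\widehat{S}_{k_j} \preceq \bigl(1 + \sigma_{\max}^2(A)/(1+\tau_k\beta_k^{-2})\bigr)\widetilde{S}_{k_j}$, hence $\lambda \le 1 + \sigma_{\max}^2(A)/(1+\tau_k\beta_k^{-2})$.

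It remains to make this uniform in $k$. Since $\{\beta_k\}$ is non-decreasing with $\beta_k \le \beta_\infty$ and $\{\tau_k\}$ is non-increasing with $\tau_k \ge \tau_\infty$, we get $\tau_k\beta_k^{-2} \ge \tau_\infty\beta_\infty^{-2}$ (with the right-hand side read as $0$ when $\beta_\infty=\infty$), whence $1/(1+\tau_k\beta_k^{-2}) \le 1/(1+\beta_\infty^{-2}\tau_\infty)$; plugging this in gives the claimed bound $1 \le \lambda \le 1 + \sigma_{\max}^2(A)\bigl(1/(1+\beta_\infty^{-2}\tau_\infty)\bigr)$. I do not anticipate any genuine difficulty; the only delicate part is the index bookkeeping relating $E_{k_j}$, $B_{k_j}$, the set $\widehat{\mathcal B}_j$, and the permutation $\mathscr P$, so that $\widehat{S}_{k_j}-\widetilde{S}_{k_j}$ is correctly identified as a sum of rank-one terms over precisely the indices at which the projection is active at a bound (i.e. $B_{k_j,(i,i)}=0$), which is exactly where the diagonal of $H_{k_j}$ retains the full $\beta_k$ contribution and is therefore bounded below as needed.
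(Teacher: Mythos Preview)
Your proposal is correct and follows essentially the same route as the paper: the paper splits $\widehat{\mathcal B}_j$ into $\mathcal B=\{i\in\widehat{\mathcal B}_j: B_{k_j,(i,i)}=1\}$ and $\mathcal N=\widehat{\mathcal B}_j\setminus\mathcal B$, writes the generalized Rayleigh quotient, and bounds the extra term $A_{\mathcal N}D_{\mathcal N}A_{\mathcal N}^\top$ using $D_{\mathcal N}\preceq (\beta_k+\rho_k^{-1})^{-1}I$ (from $Q\succeq 0$) together with $\widetilde S_{k_j}\succeq \beta_k^{-1}I_m$, then applies the monotonicity of $\beta_k,\tau_k$---exactly your Löwner-order argument phrased as a quotient.
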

\begin{proof}
\noindent Consider the preconditioned matrix $\widetilde{S}_{k_j}^{-1}\widehat{S}_{k_j}$, and let $(\lambda,u)$ be its eigenpair. Then, $\lambda$ must satisfy the following equation:
\[\lambda = \frac{u^\top \left( A_{\mathcal{B}} D_{\mathcal{B}} A_{\mathcal{B}}^\top + \beta_k^{-1} I_m +  A_{\mathcal{N}} D_{\mathcal{N}} A_{\mathcal{N}}^\top\right) u}{u^\top \left( A_{\mathcal{B}} D_{\mathcal{B}} A_{\mathcal{B}}^\top + \beta_k^{-1 }I_m\right)u}, \]
\noindent where $\mathcal{B} = \left\{i \in \widehat{\mathcal{B}}_j \colon B_{k_j,(i,i)} = 1 \right\}$, $\mathcal{N} = \widehat{\mathcal{B}}_j \setminus \mathcal{B}$, $D_{\mathcal{B}} = \widetilde{H}_{k_j,\left(\mathcal{B},\mathcal{B}\right)}^{-1}$, $D_{\mathcal{N}} = \widetilde{H}_{k_j,\left(\mathcal{N},\mathcal{N}\right)}^{-1}$. The above equality holds since $\widetilde{H}_{k_j}$ is a diagonal matrix, and $E_{k_j,(i,i)} = 0$ for every $i$ such that $\widehat{\mathcal{B}}_{j,i} \notin \mathcal{B}$ (indeed, see the definition in \eqref{eqn: preconditioner dropping matrix E}). Hence, from positive semi-definiteness of $Q$, we obtain
\begin{equation*}
\begin{split}
 1 \leq \lambda = 1 + \frac{u^\top \left( A_{\mathcal{N}} D_{\mathcal{N}} A_{\mathcal{N}}\right) u}{u^\top \left( A_{\mathcal{B}} D_{\mathcal{B}} A_{\mathcal{B}}^\top + \beta_k^{-1 }I_m \right) u} &\ \leq 1 + \beta_k\sigma_{\max}^2(A_{\mathcal{N}})\left(\beta_k+\rho_k^{-1}\right)^{-1} \\
 &\ \leq 1 + \sigma_{\max}^2(A)\left(\frac{1}{1+\beta_k^{-2}\tau_k}\right) \leq  1+\sigma_{\max}^2(A)\left(\frac{1}{1+\beta_{\infty}^{-2}\tau_{\infty}}\right),
\end{split}
\end{equation*}
\noindent where we used that $\rho_k = \beta_k/\tau_k$, $\beta_k \leq \beta_{\infty}$, and $\tau_k \geq \tau_{\infty}$. 
\end{proof}
\par Given Lemma \ref{lemma: spectral properties of approximate preconditioned Schur complement}, we are now able to invoke \cite[Theorem 3]{NLAA:BergGondMartPearPoug} to characterize the spectral properties of the preconditioned matrix $\widetilde{M}_{k_j}^{-1}M_{k_j}$. Let 
\begin{equation*}
\begin{split}
\bar{S}_{k_j} \coloneqq & \left(\widetilde{S}_{k_j}\right)^{-\frac{1}{2}}\widehat{S}_{k_j}\left(\widetilde{S}_{k_j}\right)^{-\frac{1}{2}},\qquad 
\bar{H}_{k_j} \coloneqq \widetilde{H}_{k_j,\left(\widehat{\mathcal{B}}_j,\widehat{\mathcal{B}}_j\right)}^{-1/2} H_{k_j,\left(\widehat{\mathcal{B}}_j,\widehat{\mathcal{B}}_j\right)} \widetilde{H}_{k_j,\left(\widehat{\mathcal{B}}_j,\widehat{\mathcal{B}}_j\right)}^{-1/2},\\
\alpha_{NE} \coloneqq &\ \lambda_{\min}(\bar{S}_{k_j}),\quad  \beta_{NE} \coloneqq \lambda_{\max}(\bar{S}_{k_j}),\quad  \alpha_{H} \coloneqq  \lambda_{\min}(\bar{H}_{k_j}),\quad \beta_{H} \coloneqq \lambda_{\max}(\bar{H}_{k_j}).
\end{split}
\end{equation*}
\noindent Notice that Lemma \ref{lemma: spectral properties of approximate preconditioned Schur complement} yields upper and lower bounds for $\alpha_{NE}$ and $\beta_{NE}$. From the definition of $\bar{H}_{k_j}$ we can also obtain that $\alpha_{H} \leq 1 \leq \beta_{H}$ (see \cite{NLAA:BergGondMartPearPoug}). We are now ready to state the spectral properties of the preconditioned matrix  $\widetilde{M}_{k_j}^{-1}\widehat{M}_{k_j}$.
\begin{theorem} \label{thm: spectral properties of preconditioned matrix}
Let $k$ and $j$ be some arbitrary iterates of Algorithms \textnormal{\ref{primal-dual PMM algorithm} and \ref{primal-dual SNM algorithm}}, respectively. Then, the eigenvalues of $\widetilde{M}_{k_j}^{-1}\widehat{M}_{k_j}$ lie in the union of the following intervals:
\[I_{-} \coloneqq \left[-\beta_{H} -\sqrt{\beta_{NE}}, -\alpha_{H}\right],\qquad I_+ \coloneqq \left[\frac{1}{1+\beta_{H}},1+ \sqrt{\beta_{NE}-1}\right].\]
\end{theorem}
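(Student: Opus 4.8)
The plan is to obtain the stated localisation as a direct specialisation of \cite[Theorem 3]{NLAA:BergGondMartPearPoug}. Since $\widetilde{M}_{k_j}$ is symmetric positive definite, $\widetilde{M}_{k_j}^{-1}\widehat{M}_{k_j}$ is similar to the symmetric matrix $\widetilde{M}_{k_j}^{-1/2}\widehat{M}_{k_j}\widetilde{M}_{k_j}^{-1/2}$, so it has real spectrum, and setting $\bar{A}_{k_j} \coloneqq \widetilde{S}_{k_j}^{-1/2} A_{\widehat{\mathcal{B}}_j}\, \widetilde{H}_{k_j,\left(\widehat{\mathcal{B}}_j,\widehat{\mathcal{B}}_j\right)}^{-1/2}$ one computes
\[
\widetilde{M}_{k_j}^{-1/2}\widehat{M}_{k_j}\widetilde{M}_{k_j}^{-1/2} = \begin{bmatrix} -\bar{H}_{k_j} & \bar{A}_{k_j}^\top \\ \bar{A}_{k_j} & \beta_k^{-1}\widetilde{S}_{k_j}^{-1}\end{bmatrix},
\]
which is exactly the preconditioned saddle-point form analysed in \cite{NLAA:BergGondMartPearPoug}. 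From the definitions of $\widehat{S}_{k_j}$ and $\widetilde{S}_{k_j}$ one obtains at once the Schur-complement identity $\bar{A}_{k_j}\bar{A}_{k_j}^\top + \beta_k^{-1}\widetilde{S}_{k_j}^{-1} = \bar{S}_{k_j}$, so that the ``normal-equations'' matrix appearing in the cited theorem is $\bar{S}_{k_j}$, whose extreme eigenvalues are $\alpha_{NE}$ and $\beta_{NE}$ by definition.

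Next I would verify the hypotheses of \cite[Theorem 3]{NLAA:BergGondMartPearPoug}. The $(2,2)$ block $\beta_k^{-1}\widetilde{S}_{k_j}^{-1}$ is symmetric positive definite because $E_{k_j}\succeq 0$ forces $\widetilde{S}_{k_j} = A_{\widehat{\mathcal{B}}_j}E_{k_j}A_{\widehat{\mathcal{B}}_j}^\top + \beta_k^{-1}I_m \succ 0$. For the $(1,1)$ block, the choice \eqref{eqn: Clarke subdifferential of projection choice} gives $0 \preceq B_{k_j}\preceq I_n$, hence
\[
H_{k_j} = Q + \left(\beta_k + \rho_k^{-1}\right)I_n - \beta_k B_{k_j} \succeq Q + \rho_k^{-1}I_n \succ 0,
\]
so every principal submatrix, in particular $H_{k_j,\left(\widehat{\mathcal{B}}_j,\widehat{\mathcal{B}}_j\right)}$, is positive definite, its diagonal $\widetilde{H}_{k_j,\left(\widehat{\mathcal{B}}_j,\widehat{\mathcal{B}}_j\right)}$ is positive, and therefore $\bar{H}_{k_j}$ is symmetric positive definite with $0 < \alpha_H \le \beta_H$; since $\bar{H}_{k_j}$ has unit diagonal we moreover have $\alpha_H \le 1 \le \beta_H$, as already noted. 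Finally, $\widetilde{S}_{k_j}^{-1}\widehat{S}_{k_j}$ is similar to $\bar{S}_{k_j}$, so Lemma \ref{lemma: spectral properties of approximate preconditioned Schur complement} yields $1 \le \alpha_{NE} \le \beta_{NE} \le 1 + \sigma_{\max}^2(A)\big(1+\beta_{\infty}^{-2}\tau_{\infty}\big)^{-1} < \infty$; in particular $\sqrt{\beta_{NE}-1}$ is real.

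With these facts \cite[Theorem 3]{NLAA:BergGondMartPearPoug} localises the spectrum of the symmetrised matrix, hence of $\widetilde{M}_{k_j}^{-1}\widehat{M}_{k_j}$, into two intervals; substituting $\alpha_H \le 1 \le \beta_H$ and $\alpha_{NE}\ge 1$ into the general bounds of that theorem collapses them to precisely $I_-$ and $I_+$. If one prefers a self-contained argument, the negative branch follows from the identity $\left(\lambda I + \bar{H}_{k_j}\right)x = \bar{A}_{k_j}^\top\left(\lambda I - \beta_k^{-1}\widetilde{S}_{k_j}^{-1}\right)^{-1}\bar{A}_{k_j}x$ for an eigenpair $(\lambda,(x,y))$ with $\lambda<0$: positivity of the $(2,2)$ block forces $\lambda \le -\alpha_H$, while $\left(\lambda I - \beta_k^{-1}\widetilde{S}_{k_j}^{-1}\right)^{-1}\succeq \lambda^{-1}I$ together with $\|\bar{A}_{k_j}\bar{A}_{k_j}^\top\| \le \beta_{NE}$ turns it into $\lambda^2 + h\lambda - \beta_{NE}\le 0$ with $h = x^\top\bar{H}_{k_j}x/\|x\|^2\le\beta_H$, whence $\lambda \ge -\beta_H - \sqrt{\beta_{NE}}$; the positive branch is handled by the analogous identity for $\lambda>0$. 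I expect the only real obstacle to be notational bookkeeping — matching the sign convention on the $(1,1)$ block and the precise roles of $\widetilde{H}_{k_j}$ and $\widetilde{S}_{k_j}$ with those in \cite{NLAA:BergGondMartPearPoug}, and checking that the positive definiteness of $H_{k_j,\left(\widehat{\mathcal{B}}_j,\widehat{\mathcal{B}}_j\right)}$ established above is invoked exactly where the cited theorem requires an invertible leading block, since it is this that lets the bound hold with no assumption on $A$ or $Q$ beyond $Q\succeq 0$.
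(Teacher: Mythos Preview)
Your proposal is correct and follows exactly the route the paper takes: the paper's proof is omitted entirely and simply states that the result follows by direct application of \cite[Theorem~3]{NLAA:BergGondMartPearPoug}. Your additional verification of the hypotheses (positive definiteness of $H_{k_j,(\widehat{\mathcal{B}}_j,\widehat{\mathcal{B}}_j)}$ and $\widetilde{S}_{k_j}$, the Schur-complement identity $\bar{A}_{k_j}\bar{A}_{k_j}^\top + \beta_k^{-1}\widetilde{S}_{k_j}^{-1} = \bar{S}_{k_j}$, and $\alpha_{NE}\geq 1$ from Lemma~\ref{lemma: spectral properties of approximate preconditioned Schur complement}) and the self-contained eigenvalue argument are more than the paper itself provides.
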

\begin{proof}
\noindent The proof follows by direct application of \cite[Theorem 3]{NLAA:BergGondMartPearPoug}.
\end{proof}
\begin{remark}
By combining Lemma \textnormal{\ref{lemma: spectral properties of approximate preconditioned Schur complement}} with Theorem \textnormal{\ref{thm: spectral properties of preconditioned matrix}}, we can observe that the eigenvalues of the preconditioned matrix $\widetilde{M}_{k_j}^{-1} \widehat{M}_{k_j}$ are not deteriorating as $\beta_k \rightarrow \infty$. In other words, the preconditioner is robust with respect to the penalty parameters $\beta_k,\ \rho_k$ of Algorithm \textnormal{\ref{primal-dual PMM algorithm}}. Furthermore, our choices of $B_{k_j},\ \widehat{B}_{k_j}$ in \eqref{eqn: Clarke subdifferential of projection choice} serve the purpose of further sparsifying the preconditioner in \eqref{eqn: preconditioner for explicit SSN linear system}, thus potentially further sparsifying its Cholesky decomposition. We note that the preconditioner in \textnormal{\eqref{eqn: preconditioner for explicit SSN linear system}} is an efficient choice if $n \geq m$, which is the case in the experiments considered in Section \textnormal{\ref{sec: applications}}. However, any of the preconditioners given in \textnormal{\cite{arXiv:GondPougkPears}} can be directly applied for systems appearing in the proposed inner-outer scheme. For example, in the case where $m \geq n$ one could employ the preconditioner given in \textnormal{\cite[Section 3.2]{arXiv:GondPougkPears}}, which would be a more efficient choice as compared to that given in \textnormal{\eqref{eqn: preconditioner for explicit SSN linear system}}. This is omitted here for ease of presentation.
\end{remark}

\begin{remark}
For problems solved within this work, a diagonal approximation of the Hessian (within the preconditioner) seems sufficient to deliver very good performance. Indeed, this is the case for a wide range of problems. However, in certain instances, one might consider non-diagonal approximations of the Hessian. In that case, the preconditioner in \textnormal{\eqref{eqn: preconditioner for explicit SSN linear system}} can be readily generalized and analyzed, following the developments in \textnormal{\cite[Section 3.1]{arXiv:GondPougkPears}}. 
\end{remark}
\section{Warm-starting} \label{sec: warm start}
\par Next, we would like to find a starting point $(x_0,y_0,z_0)$ for Algorithm \ref{primal-dual PMM algorithm} that is relatively close to the solution of \eqref{primal problem}--\eqref{dual problem}, since then we can expect to observe early linear convergence of Algorithm \ref{primal-dual PMM algorithm}, and obtain active-sets that contain only a small number of variables (thus reducing the memory and CPU requirements of the method). To that end, we employ a proximal alternating direction method of multipliers (pADMM; e.g. see \cite{SciComp:DengYin}) to find an approximate solution of \eqref{primal problem}--\eqref{dual problem}. We reformulate \eqref{primal problem} as follows:
\begin{equation} \label{primal problem ADMM reformulation} \tag{P'}
\underset{x \in \mathbb{R}^n, w \in \mathbb{R}^n}{\text{min}} \  c^\top x + \frac{1}{2} x^\top Q x + g(w) + \delta_{\mathcal{K}}(w), \qquad \text{s.t.}  \  Ax = b, \quad w - x = 0.
\end{equation}
\par Given a penalty $\sigma > 0$, we associate the following augmented Lagrangian to \eqref{primal problem ADMM reformulation}
\begin{equation*}
\begin{split}
\scalemath{1}{\widehat{\mathcal{L}}_{\sigma}(x,w,y_1, y_2) \coloneqq  c^\top x + \frac{1}{2} x^\top Q x + g(w) + \delta_{\mathcal{K}}(w) - \begin{bmatrix} y_1 \\ y_2 \end{bmatrix}^\top \begin{bmatrix} Ax - b\\ w - x \end{bmatrix} + \frac{\sigma}{2}\|Ax-b\|^2 + \frac{\sigma}{2}\|w-x\|^2. }
\end{split}
\end{equation*}
\noindent Algorithm \ref{proximal ADMM algorithm} summarizes a proximal ADMM for the approximate solution of \eqref{primal problem ADMM reformulation}.
\renewcommand{\thealgorithm}{pADMM}
\begin{algorithm}[!ht]
\caption{proximal ADMM}
    \label{proximal ADMM algorithm}
    \textbf{Input:}  $\sigma > 0$, $R_x \succ 0$, $\gamma \in \left(0,\frac{1+\sqrt{5}}{2}\right)$, $(x_0,w_0,y_{1,0},y_{2,0}) \in \mathbb{R}^{3 n + m}$.
\begin{algorithmic}
\For {($k = 0,1,2,\ldots$)}\begin{align*} w_{k+1} &=   \underset{w}{\arg\min}\left\{\widehat{\mathcal{L}}_{\sigma}\left(x_k,w,y_{1,k},y_{2,k}\right) \right\} \equiv \Pi_{\mathcal{K}}\left(\textbf{prox}_{\sigma^{-1} g}\left(x_k + \sigma^{-1} y_{2,k}\right)\right).\\
 x_{k+1} &= \underset{x}{\arg\min}\left\{\widehat{\mathcal{L}}_{\sigma}\left(x,w_{k+1},y_{1,k},y_{2,k}\right) + \frac{1}{2}\|x-x_k\|_{R_x}^2\right\}.\\
 \begin{bmatrix} y_{1,k+1}\\ y_{2,k+1} \end{bmatrix} &= \begin{bmatrix} y_{1,k}\\y_{2,k} \end{bmatrix} - \gamma \sigma \begin{bmatrix} Ax_{k+1}-b\\ w_{k+1}-x_{k+1} \end{bmatrix}
 \end{align*}
\EndFor
\end{algorithmic}
\end{algorithm}
\par A detailed convergence analysis of Algorithm \ref{proximal ADMM algorithm} can be found in \cite{SciComp:DengYin}. We choose $R_x \succ 0$ as a means of reducing the memory requirements of this approach. More specifically, given some constant $\hat{\sigma} > 0$, such that $\hat{\sigma}I_n - \textnormal{Off}(Q) \succ 0$, we define $R_x = \hat{\sigma}I_n - \textnormal{Off}(Q)$. 
\par The first and third steps of Algorithm \ref{proximal ADMM algorithm} are trivial, and the main computational bottleneck lies in the second sub-problem. Merging it with the subsequent dual updates yields:
\begin{equation*}
\begin{bmatrix}
-\gamma\left(\textnormal{Diag}(Q)+\hat{\sigma}I_n\right) & A^\top & -I_n\\
A & \frac{1}{\gamma \sigma}I_m & 0\\
-I_n & 0 & \frac{1}{\gamma\sigma} I_n
\end{bmatrix}
\begin{bmatrix}
x\\
y_1\\
y_2
\end{bmatrix}
= \begin{bmatrix}
\gamma\left(c - R_{x} x_k \right) + (1-\gamma)\left(A^\top y_{1,k} - y_{2,k}\right)\\
b + \frac{1}{\gamma \sigma} y_{1,k}\\
 \frac{1}{\gamma\sigma} y_{2,k} -w_{k+1} 
\end{bmatrix}.
\end{equation*}
\noindent Assuming we have sufficient memory, the previous system can be solved by means of a $LDL^\top$ factorization, since the coefficient matrix is symmetric quasi-definite. The benefit of this approach is that a single factorization can be utilized for all iterations of Algorithm \ref{proximal ADMM algorithm}. If the available memory is not sufficient, or the problem under consideration is structured (e.g. its data matrices belong to an appropriate structured matrix sequence), one might attempt to solve the previous system using a symmetric solver like MINRES (\cite{PaigeSaundersSIAMNumAnal}) or CG (\cite{HestenesSteifelPCG}). In this case, a preconditioner for either MINRES or CG would have to be computed only once (e.g. see the solver in \cite{SIAMX:Pougketal}). Alternatively, a different choice for $R_x$ could further reduce the memory requirements of this warm-starting scheme, even to the point of making it \emph{matrix-free} (e.g. see the prox-linear ADMM given in \cite[Section 1.1]{SciComp:DengYin}).
\par Finally, once an approximate solution $(\tilde{x},\tilde{w},\tilde{y}_{1},\tilde{y}_2)$ is retrieved, we set the starting point of Algorithm \ref{primal-dual PMM algorithm} as $(x_0,y_0,z_0) = (\tilde{x},\tilde{y}_1,z)$, where $z = \tilde{y}_2 - \Pi_{\partial g(\tilde{w})}\left(\tilde{y}_2\right)$. An optimal primal-dual solution of \eqref{primal problem ADMM reformulation} is such that $\tilde{y}^*_2 \in \partial g\left(\tilde{w}^*\right) + \partial \delta_{\mathcal{K}}\left(\tilde{w}^*\right)$, so the characterization of $z$ in Algorithm \ref{primal-dual PMM algorithm} can be obtained as shown in Appendix \ref{Appendix: termination criteria}.
\section{Applications and numerical results} \label{sec: applications}
\par In what follows we showcase the effectiveness of the proposed approach on  elastic-net linear regression and $L^1$-regularized optimization problems with partial differential equation constraints. For each problem, we test the active-set method against the IP-PMM (a MATLAB-based regularized interior point method) presented in \cite{NLAA:BergGondMartPearPoug} (the code of which can be found on GitHub\footnote{\url{https://github.com/spougkakiotis/IP-PMM_QP_Solver}}). We choose to compare the proposed approach against this method for multiple reasons. Firstly, both methods are written in MATLAB. Additionally, the regularized interior point method given in \cite{NLAA:BergGondMartPearPoug} has been demonstrated to be very robust and efficient in a plethora of optimization problems (see also \cite{arXiv:GondPougkPears}), and has been compared against various other optimization schemes (see \cite{SIREV:DeSimone_etal}). Finally, both methods employ preconditioned Krylov subspace solvers, with the same preconditioning approach, for the solution of their respective linear systems. We also compare the two second-order solvers against the well-known OSQP solver (see \cite{osqp}; the code can be found on Github\footnote{\url{https://github.com/osqp/osqp-matlab}}), which is an ADMM (C-based) method often employed in the literature for problems of the form of \eqref{primal problem}, using appropriate quadratic programming reformulations. In order to make the comparison fair, in all the experiments to follow we alter the termination criteria of IP-PMM to match those given in Appendix \ref{Appendix: termination criteria}. OSQP uses more relaxed termination criteria, but as can be seen in the results to follow, it does not manage to compete reliably with the second-order solvers. A follow-up numerical study on various additional applications has been compiled in an accompanying paper.
\par The active-set (AS) solver is written in MATLAB (the code is available on GitHub\footnote{\url{https://github.com/spougkakiotis/SSN_PMM}}). The experiments were run on MATLAB 2019a, on a PC with a 2.2GHz Intel core i7-8750H processor (hexa-core), 16GB RAM, using the Windows 10 operating system. The warm-starting mechanism proposed in Section \ref{sec: warm start} is allowed to run for at most 100 iterations, and is terminated if it reaches a 3-digit accurate solution. Its associated linear systems are solved using a single call to the $\texttt{ldl}$ decomposition of MATLAB. In order to accelerate the convergence of the SSN solver, we employ the following \emph{predictor-corrector-like} heuristic: at the first SSN iteration we accept a full step, without employing line-search. Then, line-search is reactivated for every subsequent iteration. We observe that this heuristic improves the practical performance of the proposed method, allowing for a rapid convergence of the SSN scheme in cases where the current estimate is close to the optimal solution of the associated sub-problem, and not hindering its robustness. Any linear system solved within SSN-PMM is solved using preconditioned MINRES, and the (2,2) block of the preconditioner (given in \eqref{eqn: preconditioner for explicit SSN linear system}) is inverted using MATLAB's \texttt{chol} function. The penalty parameters of PMM are tuned as follows: we initially set $\beta = 10^2$ and $\rho = 5\cdot 10^2$. At the end of each call to SSN, we increase them at a suitable rate. In particular, these are increased more rapidly if the dual or primal infeasibilities, respectively, have sufficiently decreased. If not, we increase the penalties more conservatively. The termination criteria of the implemented approach are given in Appendix \ref{Appendix: termination criteria}. All other implementation details follow exactly the developments in Sections \ref{sec: PAL penalties}, \ref{sec: SSN method}. 
\subsection{Elastic-net linear regression}
\par We first consider standard linear regression models
\[y_i = \beta_0 + \xi_i^\top \beta + \epsilon_i, \qquad i \in \{1,\ldots,N\}\]
\noindent where $\xi_i$ is a $d$-dimensional vector of covariates, $(\beta_0,\beta)$ are the regression coefficients and $\epsilon_i$ is some random error. Given some regularization parameters $\lambda > 0$, $\tau \in [0,1]$ for the elastic-net penalty (see \cite{CC01a,JRSS:Zou}), we would like to solve
\begin{equation} \label{eqn: linear regression model}
    \min_{(\beta_0,\beta)\ \in\ \mathbb{R} \times \mathbb{R}^d} \left\{\frac{1}{N} \sum_{i=1}^N \left(y_i - \beta_0 - \xi_i^\top \beta\right)^2 + \lambda \left(\tau \|\beta\|_1 + \frac{1-\tau}{2}\|\beta\|_2^2\right) \right\}.
\end{equation}
\paragraph{Real-world datasets:} We solve several instances of problem  \eqref{eqn: linear regression model}, arising from 4 regression datasets taken from the LIBSVM library (see \cite{CC01a}). Each dataset contains data points and features originating from some real-world application. Additional information on the datasets is collected in Table \ref{Table: Lasso regression datasets}, where we introduce name abbreviations for space considerations. 
\begin{table}[!ht] 
\centering
\caption{Lasso regression datasets.\label{Table: Lasso regression datasets}}
\scalebox{1}{
\begin{tabular}{lll}     
\specialrule{.2em}{.05em}{.05em} 
    \textbf{Name} & \textbf{\# of data points} & \textbf{\# of features} \\  \specialrule{.1em}{.3em}{.3em} 
abl $\equiv$ abalone & 4177 & 8 \\ 
cps $\equiv$ cpusmall & 8192 & 12 \\ 
ypi $\equiv$ YearPredictionMSD & 463,715 & 90\\
e2t $\equiv$ E2006-tfidf & 16,087 &  150,360 \\ 
\specialrule{.2em}{.05em}{.05em} 
\end{tabular}}
\end{table}
\par We fix the tolerance as $\texttt{tol} = 10^{-4}$, and run all three methods (active-set (AS), IP-PMM and OSQP) on each of the 4 datasets for varying regularization values $(\lambda,\tau)$. We note that if a dataset is not scaled, then an absolute tolerance is requested from all solvers. We report the values of the regularization parameters $(\lambda,\tau)$, as well as the number of iterations performed by each method. For the active-set scheme we report the number of PMM, SSN and Krylov (in particular MINRES) iterations, as well as the total number of factorizations (denoted as Fact.) of the associated preconditioners. It is often the case that the preconditioner used in a previous SSN iteration does not have to be altered in a subsequent one, which in turn reduces the number of factorizations performed by the algorithm. For IP-PMM, we report the number of outer interior point and inner Krylov iterations, and for OSQP the number ADMM iterations. For each method, we also report the total CPU time required for convergence (or unsuccessful termination) in seconds, including the time needed to transform the problem to the format accepted by each solver. In all the numerical results that follow the lowest running time exhibited by a solver, assuming it successfully converged, is presented in bold. The results are collected in Table \ref{Table lasso regression: varying regularization}. 
\begin{table}[!ht]
\caption{Elacstic-net regression: varying regularization (absolute \texttt{tol} = $10^{-4}$). \label{Table lasso regression: varying regularization}}
\centering
\scalebox{0.84}{\begin{threeparttable} 
\begin{tabular}{llllllll}     
\specialrule{.2em}{.05em}{.05em} 
    \multirow{2}{*}{\textbf{Prob.}} & \multirow{2}{*}{$(\bm{\lambda},\bm{\tau})$} & \multicolumn{3}{c}{\textbf{Iterations}}     & \multicolumn{3}{c}{\textbf{Time (s)}}  \\   \cmidrule(l{2pt}r{2pt}){3-5} \cmidrule(l{2pt}r{2pt}){6-8}
&   &  {{PMM}(SSN)[Fact.]\{Krylov\}} & {{IP--PMM(Krylov)}}  & {{OSQP}}&  {{AS}} & {{IP--PMM}}  & {OSQP}    \\ \specialrule{.1em}{.3em}{.3em} 
 \multirow{6}{*}{{abl}} & ($10^{-2}$,0.8) &  5(7)[5]\{77\} &  12(491) & 325 & 0.03 & 0.06  & \textbf{0.01} \\ 
  & ($10^{-2}$,0.2) &  4(5)[4]\{55\} & 13(510)  & 250& 0.02 & 0.07 & \textbf{0.01}  \\ 
 & ($10^{-3}$,0.8) &  5(11)[6]\{117\} & 13(532) &  275& 0.03 & 0.06 & \textbf{0.01} \\
   & ($10^{-3}$,0.2) &  5(12)[6]\{130\} & 19(853) &250  & 0.04 & 0.10 &  \textbf{0.01} \\ 
        & ($10^{-4}$,0.8) &  6(7)[6]\{49\} & 15(644) &  375& \textbf{0.02} &  0.09& \textbf{0.02} \\
   & ($10^{-4}$,0.2) &  6(6)[6]\{42\} & 11(426) &450  & 0.03 & 0.05 &  \textbf{0.01} \\ \specialrule{.00002em}{.1em}{.1em}
    \multirow{6}{*}{{cps}}  & ($10^{-2}$,0.8) &  6(9)[11]\{286\} &  15(1051) & 525 & 0.05 & 0.09  &\textbf{0.01} \\ 
  & ($10^{-2}$,0.2) &  5(7)[5]\{113\} & 14(888)  &  400 & 0.03 & 0.09  & \textbf{0.01} \\ 
 & ($10^{-3}$,0.8) &  7(21)[9]\{344\} & 14(952) & 550 & 0.06 & 0.09 & \textbf{0.01}\\
   & ($10^{-3}$,0.2) &  6(9)[6]\{151\} & 14(947) & 675 & 0.04 & 0.09 &  \textbf{0.01}\\
   & ($10^{-4}$,0.8) &  7(10)[7]\{169\} & 14(961) & 750 & 0.04 & 0.09 & \textbf{0.01}\\
   & ($10^{-4}$,0.2) &  7(10)[7]\{169\} & 13(887) & 725 & 0.03 & 0.09 &  \textbf{0.01}\\ \specialrule{.00002em}{.1em}{.1em}
\multirow{6}{*}{{ypi}} & ($10^{-2}$,0.8) &  6(36)[32]\{2494\} &  $200(9370)^{\ddagger}$ & $4000^{\ddagger}$ & \textbf{5.53} & $6.50^{\ddagger}$ & $5.40^{\ddagger}$\\ 
  & ($10^{-2}$,0.2) &  5(29)[26]\{2090\} & $200(10,282)^{\ddagger}$  & $4000^{\ddagger}$ & \textbf{5.53} &  $7.03^{\ddagger}$ &  $5.71^{\ddagger}$\\ 
 & ($10^{-3}$,0.8) &  5(32)[30]\{2283\} &  $200(12,009)^{\ddagger}$ & $4000^{\ddagger}$ & \textbf{5.57} & $6.60^{\ddagger}$ & $5.31^{\ddagger}$\\
   & ($10^{-3}$,0.2) &  6(24)[20]\{1928\} &  $200(17,980)^{\ddagger}$ &  $4000^{\ddagger}$ & \textbf{5.47} & $7.05^{\ddagger}$ &  $5.32^{\ddagger}$\\
   & ($10^{-4}$,0.8) &  6(19)[15]\{1607\} &  14(1091) & $4000^{\ddagger}$ & 5.50 & $\textbf{5.36}$ & $5.40^{\ddagger}$\\
   & ($10^{-4}$,0.2) &  6(15)[7]\{1334\} &  14(1129) &  $4000^{\ddagger}$ & \textbf{5.35} & $5.40$ &  $5.35^{\ddagger}$\\ \specialrule{.00002em}{.1em}{.1em}
   \multirow{6}{*}{{e2t}}& ($10^{-2}$,0.8) &  2(2)[2]\{4\} & $\dagger$\tnote{2}  & $\dagger$ & \textbf{878.43} & $\dagger$ & $\dagger$\\ 
  & ($10^{-2}$,0.2) &  2(2)[2]\{4\} &  $\dagger$ &  $\dagger$& \textbf{1165.83} & $\dagger$ & $\dagger$ \\ 
 & ($10^{-3}$,0.8) &  2(8)[2]\{28\} &$\dagger$ & $\dagger$ & \textbf{1442.56} & $\dagger$&$\dagger$ \\
   & ($10^{-3}$,0.2) &  2(8)[2]\{25\} & $\dagger$& $\dagger$ & \textbf{1362.64} & $\dagger$& $\dagger$ \\ 
    & ($10^{-4}$,0.8) &  2(8)[2]\{36\} &$\dagger$ & $\dagger$ & \textbf{1894.28} & $\dagger$&$\dagger$ \\
   & ($10^{-4}$,0.2) &  2(8)[2]\{31\} & $\dagger$& $\dagger$ & \textbf{1522.76} & $\dagger$& $\dagger$ \\ 
\specialrule{.2em}{.05em}{.05em} 
\end{tabular}
\begin{tablenotes}
\item[1] $\ddagger$ indicates that the solver reached the maximum number of iterations.
\item[2] $\dagger$ indicates that the solver ran out of memory.
\end{tablenotes}
\end{threeparttable}}
\end{table}
\par The results in Table \ref{Table lasso regression: varying regularization} indicate that the active-set scheme is more reliable and scalable than either IP-PMM or OSQP, while all three methods are competitive on the smaller well-conditioned instances. Interestingly enough, there are several cases where the active-set method requires fewer SSN iterations compared to the interior point iterations required by IP-PMM, thus performing significantly less Krylov iterations. A factor contributing to this behaviour is the warm-starting mechanism of the active-set scheme. IP-PMM employs the standard Mehrotra warm-starting scheme, \cite{SIAMOpt:Mehrotra}. Developing a more elaborate warm-starting scheme for IPMs requires significantly more effort, compared to the warm-starting mechanism given in Algorithm \ref{proximal ADMM algorithm}, due to the nature of the logarithmic barriers. IP-PMM and OSQP fail to converge for several ill-conditioned instances coming from the YearPredictionMSD dataset, while both IP-PMM and OSQP fail, due to memory requirements, when applied to instances coming from the largest dataset, namely E2006-tfidf. As expected, IP-PMM is more robust than OSQP. Finally, let us notice that both the active-set method and IP-PMM utilize the same preconditioning strategy and similar iterative linear algebra. However, the former solves a smaller problem formulation, while further reduces its memory requirements by only working with an active-set at each SSN iteration. This is reflected in the fact that the active-set scheme was able to reliably solve instances originating from E2006-tfidf. 
\subsection{PDE-constrained optimization} 
\par Next, we test the proposed methodology on some optimization problems with partial differential equation constraints. We consider optimal control problems of the following form:
\begin{equation} \label{generic inverse problem}
\begin{split}
\min_{\mathrm{y},\mathrm{u}} \     &\ \frac{1}{2}\| \rm{y} - \bar{\rm{y}}\|_{L^2(\Omega)}^2 + \frac{\alpha_1}{2}\|\rm{u}\|_{L^1(\Omega)}^2 + \frac{\alpha_2}{2}\|\rm{u}\|_{L^2(\Omega)}^2, \\
\text{s.t.}\ &\ \mathrm{D} \mathrm{y}(\bm{x}) + \mathrm{u}(\bm{x}) = \mathrm{g}(\bm{x}),\quad \mathrm{u_{a}}(\bm{x}) \leq \mathrm{u}(\bm{x})  \leq \mathrm{u_{b}}(\bm{x}),
\end{split}
\end{equation}
\noindent where $(\rm{y},\rm{u}) \in \text{H}^1(\Omega) \times \text{L}^2(\Omega)$, $\mathrm{D}$ is some linear differential operator, $\bm{x}$ is a $2$-dimensional spatial variable, and $\alpha_1,\ \alpha_2 \geq 0$ are the regularization parameters of the control variable. The problem is considered on a given compact spatial domain $\Omega$, where $\Omega \subset \mathbb{R}^{2}$ has boundary $\partial \Omega$, and is equipped with Dirichlet boundary conditions. The algebraic inequality constraints are assumed to hold a.e. on $\Omega$, while ${\rm u_a}$ and ${\rm u_b}$ may take the form of constants or functions of the spatial variables. 
\par We solve problem \eqref{generic inverse problem} via a discretize-then-optimize strategy. We employ the Q1 finite element discretization implemented in IFISS\footnote{\url{https://personalpages.manchester.ac.uk/staff/david.silvester/ifiss/default.htm}} (see \cite{IFISSACM,IFISSSIAMREVIEW}) which yields a sequence of $\ell_1$-regularized convex quadratic programming problems in the form of \eqref{primal problem}. We note that the discretization of the smooth parts of problem \eqref{generic inverse problem} follows a standarad Galerkin approach (e.g. see \cite{AMS:Trolzsch}), while the $L^1$ term is discretized by the \emph{nodal quadrature rule} as in \cite{COAP:SongChenYu} (an approximation that achieves a first-order convergence--see \cite{ESAIM:GerdDaniel}). In what follows, we consider Poisson's as well as the convection--diffusion state equations. 

\subsubsection{Poisson optimal control}
\par We first consider two-dimensional $L^1/L^2$-regularized Poisson optimal control problems. The problem is posed on $\Omega = (0,1)^2$. Following \cite[Section 5.1]{NLAA:PearsonPorcStoll}, we set constant control bounds $\mathrm{u}_a = -2$, $\mathrm{u}_b = 1.5$, and the desired state as $\bar{\mathrm{y}} = \sin(\pi x_1)\sin(\pi x_2)$. In Table \ref{Table Poisson optimal control: varying grid-size and L1 regularization}, we fix the $L^2$ regularization parameter to the value $\alpha_2 = 10^{-2}$, the (relative) tolerance to $\texttt{tol} = 10^{-6}$, and present the runs of the three methods (i.e. active-set, IP-PMM, OSQP) for varying $L^1$ regularization (i.e. $\alpha_1$) as well as grid size. We report the size of the resulting discretized problems (before any reformulation), the value of $\alpha_1$, the number iterations required by each solver, as well as the total time to convergence. 

\begin{table}[!ht]
\caption{Poisson control: varying grid size and $L^1$ regularization. $(\texttt{tol},\alpha_2) = (10^{-6},10^{-2})$.\label{Table Poisson optimal control: varying grid-size and L1 regularization}}
\centering
\scalebox{0.84}{\begin{threeparttable}
\begin{tabular}{llllllll}     
\specialrule{.2em}{.05em}{.05em} 
    \multirow{2}{*}{\textbf{$\bm{n}$}} & \multirow{2}{*}{$\bm{\alpha_1}$} & \multicolumn{3}{c}{\textbf{Iterations}}     &  \multicolumn{3}{c}{\textbf{Time (s)}}  \\   \cmidrule(l{2pt}r{2pt}){3-5} \cmidrule(l{2pt}r{2pt}){6-8}
&   &  {{PMM(SSN)[Fact.]\{Krylov\}}} & {{IP-PMM(Krylov)}}  & {{OSQP}} & AS & IP-PMM & OSQP      \\ \specialrule{.1em}{.3em}{.3em} 
 \multirow{4}{*}{$8.45\cdot 10^3$} & $10^{-2}$ &  13(58)[26]\{1278\} & 13(1353)  & 2075 &  \textbf{5.92} &  6.77 & 18.58  \\
  &   $10^{-4}$ &13(47)[30]\{572\} &14(1586) &  2150 &   \textbf{3.27} & 7.70 & 18.50\\
  &   $10^{-6}$ &  13(47)[30]\{574\} & 14(1586)  & 2225 &  \textbf{3.20} & 7.51 & 18.82\\ 
    &   $0$ & 13(47)[30]\{574\} &  14(1586)  &  2375 &  \textbf{3.38}  & 7.69 & 21.14\\ \specialrule{.00002em}{.1em}{.1em}
 \multirow{4}{*}{$3.32\cdot 10^4$} & $10^{-2}$ & 12(26)[22]\{262\} & 15(1514) &  $4000^{\ddagger}$\tnote{1} & \textbf{7.43}  & 24.85 & $143.40^{\ddagger}$  \\
  &   $10^{-4}$ & 12(26)[20]\{219\} & 15(1759) & $4000^{\ddagger}$ &  \textbf{6.75} & 27.93 & $149.42^{\ddagger}$ \\
  &   $10^{-6}$ & 12(27)[21]\{229\} & 15(1760) & $4000^{\ddagger}$ &  \textbf{6.92} & 27.87 & $145.92^{\ddagger}$ \\ 
    &   $0$ & 12(27)[21]\{229\} & 15(1760)  & $4000^{\ddagger}$ &  \textbf{6.92} & 29.09 & $166.84^{\ddagger}$\\ \specialrule{.00002em}{.1em}{.1em}
\multirow{4}{*}{$1.32\cdot 10^5$} & $10^{-2}$ & 13(13)[13]\{132\} & 13(981)  & $4000^{\ddagger}$ &  \textbf{25.16} & 81.17 & $843.76^{\ddagger}$\\
  &   $10^{-4}$ & 13(13)[13]\{129\} & 14(1269) &  $4000^{\ddagger}$ & \textbf{24.72} & 102.82 & $835.53^{\ddagger}$\\
  &   $10^{-6}$ & 13(13)[13]\{129\} & 14(1271) & $4000^{\ddagger}$ &  \textbf{24.89} & 103.68 & $823.21^{\ddagger}$ \\ 
    &   $0$ & 13(13)[13]\{129\} & 14(1271) & $4000^{\ddagger}$ & \textbf{24.77}  & 103.62 & $847.44^{\ddagger}$ \\ \specialrule{.00002em}{.1em}{.1em}
    \multirow{4}{*}{$5.26\cdot 10^5$} & $10^{-2}$ & 12(12)[12]\{144\} & 14(1349) & $4000^{\ddagger}$  & \textbf{116.31} & 472.25 & $5120.49^{\ddagger}$ \\
  &   $10^{-4}$ & 12(12)[12]\{144\} & 14(1309) &  $4000^{\ddagger}$  &  \textbf{121.45} & 477.11 &   $5336.74^{\ddagger}$\\
  &   $10^{-6}$ &12(12)[12]\{144\} & 14(1309) & $4000^{\ddagger}$  & \textbf{115.93}  & 491.46&   $5225.83^{\ddagger}$\\ 
    &   $0$ & 12(12)[12]\{144\}  &14(1364)  & $4000^{\ddagger}$  &   \textbf{116.24} & 508.72 &  $5047.28^{\ddagger}$ \\ \specialrule{.00002em}{.1em}{.1em}
    \multirow{4}{*}{$2.11\cdot 10^6$} & $10^{-2}$ & 13(13)[13]\{160\} & $\dagger$\tnote{2}  & $\dagger$ &  \textbf{604.55} & $\dagger$ & $\dagger$ \\
  &   $10^{-4}$ & 13(13)[13]\{160\}  & $\dagger$ &  $\dagger$ & \textbf{605.09} & $\dagger$ & $\dagger$\\
  &   $10^{-6}$ &  13(13)[13]\{160\} & $\dagger$ & $\dagger$ &  \textbf{615.46} & $\dagger$& $\dagger$\\ 
    &   $0$ &   13(13)[13]\{160\} & $\dagger$ & $\dagger$ & \textbf{608.02}  & $\dagger$ & $\dagger$ \\
\specialrule{.2em}{.05em}{.05em} 
\end{tabular}
\begin{tablenotes}
\item[1] $\ddagger$ indicates that the solver reached the maximum number of iterations.
\item[2] $\dagger$ indicates that the solver ran out of memory.
\end{tablenotes}
\end{threeparttable}}
\end{table}
\par Some observations are in order. Firstly, we should note that the active-set and the IP-PMM algorithms are very efficient for finding a solution to relatively high accuracy (i.e. $\texttt{tol} = 10^{-6}$). Their convergence behaviour is barely affected by the $L^1$ regularization parameter, and both solvers exhibit robustness with respect to the problem size. The active-set scheme consistently outperforms both IP-PMM and OSQP, and manages to solve the largest instances without running into memory issues. We observe that OSQP is not particularly efficient or robust, failing to solve most instances to the desired accuracy. IP-PMM is rather efficient, but is outperformed by the active-set scheme due to the inherent ill-conditioning of its associated linear systems. Surprisingly, for this set of problems, and for the requested tolerance, the active-set scheme requires a comparable number of Newton iterations compared to those required by IP-PMM, and thus performs fewer Krylov iterations (since its associated linear systems are much better conditioned). It requires less memory than IP-PMM (since it only considers an active-set at each iteration), which is verified by the inability of the latter to solve the largest instances. Next, we fix $\alpha_1 = 10^{-4}$ and $n = 1.32\cdot 10^{5}$, and vary the $L^2$ regularization parameter as well as the tolerance. The results are collected in Table \ref{Table Poisson optimal control: varying tolerance and L2 regularization}.
\begin{table}[!ht]
\caption{Poisson control: varying accuracy and $L^2$ regularization. $(n,\alpha_1) = (1.32\cdot 10^5,10^{-4})$.\label{Table Poisson optimal control: varying tolerance and L2 regularization}}
\centering
\scalebox{0.90}{\begin{threeparttable}
\begin{tabular}{llllllll}     
\specialrule{.2em}{.05em}{.05em} 
    \multirow{2}{*}{\textbf{{tol}}} & \multirow{2}{*}{$\bm{\alpha_2}$} & \multicolumn{3}{c}{\textbf{Iterations}}   & \multicolumn{3}{c}{\textbf{Time (s)}}  \\   \cmidrule(l{2pt}r{2pt}){3-5} \cmidrule(l{2pt}r{2pt}){6-8}
&   &  {{PMM(SSN)[Fact.]\{Krylov\}}} & {{IP-PMM(Krylov)}}  & {{OSQP}} & AS & IP-PMM & OSQP     \\ \specialrule{.1em}{.3em}{.3em}
\multirow{4}{*}{$10^{-3}$} & $10^{-2}$ & 1(1)[1]\{11\} & 6(93)  &  25 &  \textbf{12.37} & 15.38 & 42.70 \\
  &   $10^{-4}$ & 1(1)[1]\{11\} & 6(74) &  25 &   \textbf{12.11} & 16.16 & 41.00  \\
  &   $10^{-6}$ & 1(1)[1]\{11\} & 5(46)  & 25 & 12.39  & \textbf{12.14} & 40.44\\ 
    &   $0$ & 1(1)[1]\{11\} & 5(46) & 25 & 11.92 & \textbf{11.91} & 12.45 \\  \specialrule{.00002em}{.1em}{.1em}
\multirow{4}{*}{$10^{-5}$} & $10^{-2}$  & 7(7)[7]\{91\} &  8(223) & 50 & \textbf{19.71} & 26.01 & 45.37\\
  &   $10^{-4}$ & 7(7)[7]\{91\} & 8(198) & 50 &  \textbf{19.66} & 26.67 & 47.29 \\
  &   $10^{-6}$ & 7(7)[7]\{91\} & 7(136)  &  50 &  \textbf{19.39} & 20.30 & 44.99\\ 
    &   $0$ & 7(7)[7]\{91\} & 7(136) & 50 &  19.29 & 19.32 & \textbf{15.90} \\ \specialrule{.00002em}{.1em}{.1em}
\multirow{4}{*}{$10^{-7}$} & $10^{-2}$ & 20(55)[47]\{547\} & 15(1692) & $4000^{\ddagger}$\tnote{1}  &  \textbf{62.84} & 129.51 & $848.35^{\ddagger}$ \\
  &   $10^{-4}$ & 19(62)[52]\{610\} & 15(937)  & $4000^{\ddagger}$ & \textbf{71.61} & 85.65& $845.62^{\ddagger}$\\
  &   $10^{-6}$ & 20(68)[55]\{655\} & 16(933)  & $4000^{\ddagger}$ &  \textbf{73.65} & 81.02& $829.25^{\ddagger}$ \\
    &   $0$ & 20(64)[51]\{631\} & 16(824)  & $4000^{\ddagger}$ & \textbf{67.40} & 71.81 & $275.46^{\ddagger}$\\  
\specialrule{.2em}{.05em}{.05em} 
\end{tabular}\begin{tablenotes}
\item[1] $\ddagger$ indicates that the solver reached the maximum number of iterations.
\end{tablenotes}
\end{threeparttable}}
\end{table}
\par The results in Table \ref{Table Poisson optimal control: varying tolerance and L2 regularization} indicate that, unlike OSQP and IP-PMM, the active-set method exhibits a good level of robustness with respect to the $L^2$ regularization parameters. Nonetheless, both second-order solvers provide a solution reliably for lower tolerance values. When requesting a (relatively) low-accuracy solution (i.e. $\texttt{tol} =10^{-3}$ or $\texttt{tol} = 10^{-5}$), we observe that the second-order solver is barely needed, as the starting point yielded by Algorithm \ref{proximal ADMM algorithm} is already very close to such a solution. This is also verified by the good behaviour of OSQP in Table \ref{Table Poisson optimal control: varying tolerance and L2 regularization} for 3- or 5-digit accurate solutions. When requesting a highly accurate solution (i.e. $\texttt{tol} = 10^{-7}$) we observe that the number of SSN iterations performed by the active-set method is much greater than the number of IP-PMM iterations. Nevertheless, the former solver performs fewer Krylov iterations, which confirms that its associated linear systems are much better conditioned. We note that the AS implementation is rather aggressive, since we allow at most 8 SSN iterations per PMM sub-problem. Overall, the method scales quite well with the size of the problem, and the memory requirements are very reasonable, allowing for the solution of large-scale instances on a personal computer.  
\subsubsection{Convection--diffusion optimal control}
\par We now consider the optimal control of the convection--diffusion equation, i.e. $-\epsilon \rm{\Delta y} + \rm{w} \cdot \nabla y = u$, on the domain $\Omega = (0,1)^2$, where $\rm{w}$ is the wind vector given by $\rm{w} = [2x_2(1-x_1)^2, -2x_1(1-x_2^2)]^\top$, with control bounds $\rm{u_a} = -2$, $\rm{u_b} = 1.5$ and free state (e.g. see \cite[Section 5.2]{NLAA:PearsonPorcStoll}). The problem is discretized using Q1 finite elements, employing the Streamline Upwind Petrov-Galerkin (SUPG) upwinding scheme implemented in \cite{BrooksHughesCMAME}. We set the desired state as $\rm{\bar{y}} = exp(-64((x_1 - 0.5)^2 + (x_2 - 0.5)^2))$, with zero boundary conditions, and the diffusion coefficient as $\epsilon = 0.05$. In Table \ref{Table Convection-diffusion optimal control: varying grid-size and L1 regularization}, we fix the ${L}^2$ regularization parameter as $\alpha_2 = 10^{-2}$ and the tolerance to $\texttt{tol} = 10^{-6}$ and run the three methods with different ${L}^1$ regularization values (i.e. $\alpha_1$) and with increasing grid size. 
\begin{table}[!ht]
\caption{Convection--diffusion control: varying grid size and $L^1$ regularization. ($\texttt{tol}, \alpha_2,\epsilon) = (10^{-6},10^{-2},0.05)$.\label{Table Convection-diffusion optimal control: varying grid-size and L1 regularization}}
\centering
\scalebox{0.84}{\begin{threeparttable}
\begin{tabular}{llllllll}     
\specialrule{.2em}{.05em}{.05em}
    \multirow{2}{*}{\textbf{$\bm{n}$}} & \multirow{2}{*}{$\bm{\alpha_1}$} & \multicolumn{3}{c}{\textbf{Iterations}}     & \multicolumn{3}{c}{\textbf{Time (s)}}  \\   \cmidrule(l{2pt}r{2pt}){3-5} \cmidrule(l{2pt}r{2pt}){6-8}
&   &  {{PMM(SSN)[Fact.]\{Krylov\}}} & {{IP-PMM(Krylov)}}  & {{OSQP}} & AS & IP-PMM & OSQP  \\ \specialrule{.1em}{.3em}{.3em}
 \multirow{4}{*}{$8.45\cdot 10^3$} & $10^{-3}$ & 16(70)[34]\{2430\} & 26(8085) & 200 & 9.56 &33.55 & \textbf{1.38}\\
  &   $10^{-4}$ & 16(56)[25]\{1750\} & 25(7546) & 150 &  7.31 & 32.31 & \textbf{1.14}\\
  &   $10^{-5}$ & 16(43)[24]\{997\} & 25(7327) & 975 & \textbf{4.45} & 27.37 & 5.61 \\ 
    &   $0$ & 16(43)[24]\{787\} & 25(7509) & 150 & 4.15 & 27.92 & \textbf{1.11}\\ \specialrule{.00002em}{.1em}{.1em}
 \multirow{4}{*}{$3.32\cdot 10^4$} & $10^{-3}$ & 17(50)[29]\{1351\} & 25(7331) & 475 & 22.22  &104.77 & \textbf{19.31}\\
  &   $10^{-4}$ & 17(38)[26]\{777\} & 25(7198) & 325 & 14.87 & 99.49 & \textbf{14.21}\\
  &   $10^{-5}$ & 17(38)[27]\{678\} & 25(7176) & 1150 & \textbf{13.69} & 102.91 & 41.01 \\ 
    &   $0$ & 17(38)[27]\{600\} & 25(7145) & 325 & \textbf{12.75} & 128.95  & 14.41\\ \specialrule{.00002em}{.1em}{.1em}
\multirow{4}{*}{$1.32\cdot 10^5$} & $10^{-3}$ & 20(36)[27]\{865\} & 23(5544) & 3125 & \textbf{65.17} & 399.20 & 576.45 \\
  &   $10^{-4}$ & 20(31)[25]\{577\} & 22(4971) & 3125 &  \textbf{52.97} & 356.23 & 566.14 \\
  &   $10^{-5}$ & 20(31)[28]\{521\} & 23(5428) & 3075 & \textbf{54.49} & 394.42 & 558.23 \\ 
    &   $0$ & 20(31)[26]\{490\} & 23(5414) & 3075 & \textbf{51.82} & 392.11 & 557.86\\ \specialrule{.00002em}{.1em}{.1em}
    \multirow{4}{*}{$5.26\cdot 10^5$} & $10^{-3}$ & 23(24)[23]\{330\} & 9(385) & $4000^{\ddagger}$\tnote{1} & \textbf{144.88} & 161.23 & $5002.59^{\ddagger}$ \\
  &   $10^{-4}$ & 23(24)[24]\{325\} & 9(385) & $4000^{\ddagger}$ & \textbf{144.76} & 162.42 & $5329.52^{\ddagger}$ \\
  &   $10^{-5}$ & 23(24)[24]\{318\} & 9(385) & $4000^{\ddagger}$   & \textbf{154.46} & 166.53 & $5147.38^{\ddagger}$ \\ 
    &   $0$ & 23(24)[23]\{321\} & 9(385) & $4000^{\ddagger}$  & \textbf{165.87} & 166.53 & $4984.27^{\ddagger}$\\ \specialrule{.00002em}{.1em}{.1em}
        \multirow{4}{*}{$2.11\cdot 10^6$} & $10^{-3}$ & 41(44)[41]\{450\} & $\dagger$\tnote{2} & $\dagger$ & \textbf{964.01} &  $\dagger$ &  $\dagger$ \\
  &   $10^{-4}$ & 41(44)[41]\{449\} & $\dagger$ &  $\dagger$ & \textbf{981.50} &  $\dagger$ &  $\dagger$ \\
  &   $10^{-5}$ & 41(44)[44]\{456\} &  $\dagger$ &  $\dagger$   & \textbf{999.31} &  $\dagger$ &  $\dagger$  \\ 
    &   $0$ & 41(44)[41]\{457\} &  $\dagger$ &  $\dagger$  & \textbf{1086.92} &  $\dagger$ &  $\dagger$\\
\specialrule{.2em}{.05em}{.05em}
\end{tabular}\begin{tablenotes}
\item[1] $\ddagger$ indicates that the solver reached the maximum number of iterations.
\item[2] $\dagger$ indicates that the solver ran out of memory.
\end{tablenotes}
\end{threeparttable}}
\end{table}
\par From Table \ref{Table Convection-diffusion optimal control: varying grid-size and L1 regularization} we observe, similar to the Poisson examples, that OSQP is competitive for the small instances, but is unable to solve the larger instances and does not exhibit robustness with respect to the $L^1$ regularization parameter. In contrast, both second-order solvers (active-set and IP-PMM) exhibit robustness with respect to the $L^1$-regularization parameter. However, IP-PMM seems to be affected by the problem size. Indeed, the behaviour of IP-PMM is significantly worse compared to that of the active-set method for the smaller instances (with the latter being up to 6 times faster on some instances), which are the most ill-conditioned ones. Overall, the active-set scheme outperforms the other two methods in all the large instances, and scales better in terms of memory requirements. Next, we set $\alpha_1 = 10^{-3}$, $n = 1.32 \cdot 10^{5}$, $\texttt{tol} = 10^{-6}$, and run the method with varying $L^2$ regularization and diffusion coefficient $\epsilon$. The results are collected in Table \ref{Table convection diffusion optimal control: varying diffusion and L2 regularization}.
\begin{table}[!ht]
\caption{Convection--diffusion control: varying diffusion and $L^2$ regularization. $(n,\texttt{tol},\alpha_1) = (1.32\cdot 10^5,10^{-6},10^{-3})$.\label{Table convection diffusion optimal control: varying diffusion and L2 regularization}}
\centering
\scalebox{0.90}{\begin{threeparttable}
\begin{tabular}{llllllll}     
\specialrule{.2em}{.05em}{.05em}
    \multirow{2}{*}{$\bm{\epsilon}$} & \multirow{2}{*}{$\bm{\alpha_2}$} & \multicolumn{3}{c}{\textbf{Iterations}}      & \multicolumn{3}{c}{\textbf{Time (s)}}  \\   \cmidrule(l{2pt}r{2pt}){3-5} \cmidrule(l{2pt}r{2pt}){6-8}
&   &  {{PMM(SSN)[Fact.]\{Krylov\}}} & {{IP-PMM(Krylov)}}  & {{OSQP}} & AS & IP-PMM & OSQP  \\ \specialrule{.1em}{.3em}{.3em}
\multirow{4}{*}{$0.01$} & $10^{-2}$ & 21(44)[33]\{949\} & 23(5674) & 400 & \textbf{72.33} & 419.83  & 128.24 \\
  &   $10^{-4}$ & 22(56)[41]\{1155\} & 21(2009) & 1100 & \textbf{91.38} & 159.13 & 227.03\\
  &   $10^{-6}$ & 22(56)[40]\{1172\} & 20(1623) & 1150 & \textbf{87.11}  & 129.23 & 246.03 \\ 
    &   $0$ & 22(56)[40]\{1191\} &  20(1381) & 1500  & \textbf{87.65}  & 109.90 & 102.79\\  \specialrule{.00002em}{.1em}{.1em}
\multirow{4}{*}{$0.02$} & $10^{-2}$  &20(38)[29]\{901\}  & 23(5508) & 875 & \textbf{70.20}   & 411.26& 184.55 \\
  &   $10^{-4}$ & 22(56)[38]\{1199\} &  20(1851) & 1425 & \textbf{86.07}  & 150.27 & 279.97\\
  &   $10^{-6}$ & 22(56)[43]\{1177\} & 20(1636)  & 2000 & \textbf{87.01} & 136.57 & 377.83\\ 
    &   $0$ & 22(64)[47]\{1389\} & 20(1399) & 1500 &  99.41 & 113.05 & \textbf{93.30} \\  \specialrule{.00002em}{.1em}{.1em}
\multirow{4}{*}{$0.05$} & $10^{-2}$ & 20(36)[27]\{865\} & 23(5544)  & 3125 & \textbf{65.17} & 399.20 & 576.45 \\
  &   $10^{-4}$ & 21(43)[34]\{916\} & 19(1692) &  3000 &  \textbf{71.85} & 136.56&  550.13\\
  &   $10^{-6}$ & 21(43)[33]\{957\} &  18(1396) &  3375 & \textbf{74.79} & 108.95  &  638.95\\ 
    &   $0$ & 21(43)[33]\{957\} & 18(1195) &  2975 & \textbf{73.41} & 98.43& 179.38 \\  
\specialrule{.2em}{.05em}{.05em}
\end{tabular}
\end{threeparttable}}
\end{table}
\par Again, AS is quite robust with respect to the $L^2$ regularization parameter, which is not the case for the other two solvers. Furthermore, the same applies for the convection diffusion coefficient $\epsilon$, although IP-PMM is also little affected by it. This is not the case for OSQP, which exhibits a very different behaviour for different values of the diffusion coefficient. Finally, we can observe that the active-set method is able to find accurate solutions consistently and very efficiently, making it a competitive solver for PDE-constrained optimization instances. Overall, we observe that the proposed scheme is consistently more efficient and reliable than the other two methods and has the ability to provide highly accurate solutions without running into numerical issues.
\par Nevertheless, we should mention that we expect IP-PMM to behave better for arbitrary convex quadratic instances, since, in general, interior point methods are more robust solvers (both theoretically and numerically). However, in certain cases where $\ell_1$ terms are present in the objective, the proposed active-set scheme can be a much better choice in terms of stability and efficiency. This has been numerically demonstrated here for the case of certain regularized linear regression and $L^1$-regularized PDE-constrained optimization problems, but we conjecture that this behaviour can be observed for several other problems appearing in practice. A more in-depth study comparing the three schemes on various other important applications, including problems with general piecewise-linear structure and nonseparable $\ell_1$-terms, has been written in parallel with this work and has been compiled in an accompanying paper.
\section{Conclusions} \label{sec: Conclusions}
\par In this paper we derived an efficient active-set method suitable for the solution of $\ell_1$-regularized convex quadratic instances. The algorithm consists of a proximal method of multipliers that employs a standard semismooth Newton method for solving the associated sub-problems. We have shown that the proposed PMM converges globally under very mild assumptions, while it can potentially achieve a global linear and local superlinear convergence rate. The linear systems within SSN are solved using the preconditioned minimum residual method, and the proposed preconditioner is cheap to invert and exhibits very good behaviour and robustness with respect to the PMM penalty parameters. The efficiency of the method is further improved by using a warm-starting strategy based on a proximal alternating direction method of multipliers. The proposed approach has been extensively tested on certain regularized linear regression and PDE-constrained optimization problems, and computational evidence, including a detailed comparison against an IPM and an ADMM solver, has been provided to demonstrate its efficiency, reliability, and scalability. An accompanying work extending the proposed methodology to problems with general piecewise-linear terms in the objective has been written in parallel with this paper.
\appendix
\section{Appendix}
\subsection{Derivation of the dual problem} \label{Appendix: derivation of dual}
\par The dual of \eqref{primal problem} is $ \sup_{y,z}\inf_x \left\{ \ell(x,y,z)\right\}.$ Let $f(x) = c^\top x + \frac{1}{2}x^\top Q x$. We have
\begin{equation*}
\begin{split}
\inf_{x}\left\{\ell(x,y,z)\right\} =&\ -\sup_x \left\{(y^\top A x-z)^\top x -\left(f(x) + g(x) \right) \right\} + y^\top b - \delta_{\mathcal{K}}^*(z) \\
=&\ -\left(f + g\right)^*(A^\top y - z) + y^\top b - \delta_{\mathcal{K}}^*(z)\\
=&\ -\inf_{x'} \left\{f^*\left(A^\top y - z - x'\right) + g^*(x') \right\} + y^\top b - \delta_{\mathcal{K}}^*(z),
\end{split}
\end{equation*}
\noindent where we used the definition of the convex conjugate and a property of the infimal convolution, i.e. $(f + g)^*(x) = \inf_{x'}\left\{f^*(x - x') + g^*(x')\right\}$ (see \cite[Proposition 13.21]{Springer:BausComb}). However, from the definition of $f(\cdot)$ we have $f^*(A^\top y - z - x') = \frac{1}{2}x^\top Q x + \delta_{\{0\}}\left(c + Qx - A^\top y + z + x'\right).$ By substituting this, and by eliminating variable $x'$ we obtain \eqref{dual problem}.
\subsection{Termination criteria} \label{Appendix: termination criteria}
\par We write the optimality conditions for \eqref{primal problem}--\eqref{dual problem} as
\begin{equation} \label{optimality conditions for P-D}
x = \textbf{prox}_{g}\left(x -c - Qx + A^\top y - z\right),\qquad Ax = b,\qquad x = \Pi_{\mathcal{K}}(x + z),
\end{equation}
\noindent and the termination criteria for Algorithm \ref{primal-dual PMM algorithm} (given a tolerance $\epsilon > 0$) are set as
\begin{equation} \label{termination criteria for PD-PMM}
\frac{\|x - \textbf{prox}_{g}\left(x -c - Qx + A^\top y - z\right)\|}{1+\|c\|_{\infty}} \leq \epsilon,\quad \frac{\|Ax - b\|}{1+\|b\|_{\infty}} \leq \epsilon,\quad \frac{\|x - \Pi_{\mathcal{K}}(x + z)\|}{1+\|x\|_{\infty} + \|z\|_{\infty}} \leq \epsilon.
\end{equation}
\par Finally, the termination criteria of Algorithm \ref{proximal ADMM algorithm} are set as
\begin{equation} \label{termination criteria for pADMM}
\frac{\left\|c + Qx - A^\top y_1 + y_2\right\|}{1+\|c\|} \leq \epsilon,\quad \frac{\left\|\left(Ax - b,w-x\right)\right\|}{1+\|b\|} \leq \epsilon, \quad \frac{\|w - \Pi_{\mathcal{K}}\left(\textbf{prox}_{g}\left(w + y_2\right)\right)\|}{1+\|w\| + \|y_2\|} \leq \epsilon.
\end{equation}
\bibliography{references} 

\begin{thebibliography}{10}

\bibitem{OMS:GondAlt}
{\sc A.~Altman and J.~Gondzio}, {\em {R}egularized symmetric indefinite systems
  in interior point methods for linear and quadratic optimization},
  Optimization Methods and Software, 11 (1999), pp.~275--302,
  \url{https://doi.org/10.1080/10556789908805754}.

\bibitem{Springer:BausComb}
{\sc H.~H. Bauschke and P.~L. Combettes}, {\em {C}onvex {A}nalysis and
  {M}onotone {O}perator {T}heory in {H}ilbert {S}paces}, CMS Books in
  Mathematics, Springer, New York, NY, 2011,
  \url{https://doi.org/10.1007/978-1-4419-9467-7}.

\bibitem{SIAM:Beck}
{\sc A.~Beck}, {\em {F}irst-{O}rder {M}ethods in {O}ptimization}, MOS-SIAM
  Series on Optimization, SIAM \& Mathematical Optimization Society,
  Philadelphia, 2017, \url{https://doi.org/10.1137/1.9781611974997}.

\bibitem{NLAA:BergGondMartPearPoug}
{\sc L.~Bergamaschi, J.~Gondzio, A.~Martínez, J.~W. Pearson, and
  S.~Pougkakiotis}, {\em {A} new preconditioning approach for an interior
  point-proximal method of multipliers for linear and convex quadratic
  programming}, Numerical Linear Algebra with Applications, 28 (2020),
  p.~e2361, \url{https://doi.org/10.1002/nla.2361}.

\bibitem{BertsekasNedicOzdaglar}
{\sc D.~P. Bertsekas, A.~Nedic, and E.~Ozdaglar}, {\em {C}onvex {A}nalysis and
  {O}ptimization}, Athena Scientific, 2003.

\bibitem{JCAM:BoggsTolleSQP}
{\sc P.~T. Boggs and J.~W. Tolle}, {\em {S}equential quadratic programming for
  large-scale nonlinear optimization}, Journal of Computational and Applied
  Mathematics, 124 (2000), pp.~123--137,
  \url{https://doi.org/10.1016/S0377-0427(00)00429-5}.

\bibitem{BrooksHughesCMAME}
{\sc A.~N. Brooks and T.~J.~R. Hughes}, {\em {S}treamline
  upwind/{P}etrov--{G}alerkin formulations for convection dominated flows with
  particular emphasis on the incompressible {N}avier--{S}tokes equations},
  Computer Methods in Applied Mechanics and Engineering, 32 (1982),
  pp.~199--259, \url{https://doi.org/10.1016/0045-7825(82)90071-8}.

\bibitem{CC01a}
{\sc C.-C. Chang and C.-J. Lin}, {\em {LIBSVM}: A library for support vector
  machines}, ACM Transactions on Intelligent Systems and Technology, 2 (2011),
  pp.~27:1--27:27.
\newblock Software available at \url{http://www.csie.ntu.edu.tw/~cjlin/libsvm}.

\bibitem{NLAA:ChenQi}
{\sc J.~Chen and L.~Qi}, {\em {G}lobally and superlinearly convergent inexact
  {N}ewton-{K}rylov algorithms for solving nonsmooth equations}, Numerical
  Linear Algebra with Applications, 17 (2010), pp.~155--174,
  \url{https://doi.org/10.1002/nla.673}.

\bibitem{SIREV:Chenetal}
{\sc S.~S. Chen, D.~L. Donoho, and M.~A. Saunders}, {\em {A}tomic decomposition
  by basis pursuit}, SIAM Review, 43 (2001), pp.~129--159,
  \url{https://doi.org/10.1137/S003614450037906X}.

\bibitem{SIAMOpt:Christofetal}
{\sc C.~Christof, H.~C. De~Los~Reyes, and C.~Meyer}, {\em {A} nonsmooth
  trust-region method for locally {L}ipschitz functions with applications to
  optimization problems constrained by variational inequalities}, SIAM Journal
  on Optimization, 30 (2020), pp.~2163--2196,
  \url{https://doi.org/10.1137/18M1164925}.

\bibitem{JWS:Clarke}
{\sc F.~Clarke}, {\em {O}ptimization and {N}onsmooth {A}nalysis}, Classics in
  Applied Mathematics, John Wiley and Sons, New York, 1990,
  \url{https://doi.org/10.1137/1.9781611971309}.

\bibitem{arXiv:ClasValk}
{\sc C.~Clason and T.~Valkonen}, {\em {I}ntroduction to {N}onsmooth {A}nalysis
  and {O}ptimization}, arXiv preprint arXiv:1912.08672,  (2020).

\bibitem{COAP:Marchi}
{\sc A.~De~Marchi}, {\em {O}n a primal-dual {N}ewton proximal method for convex
  quadratic programs}, Computational Optimization and Applications,  (2022),
  \url{https://doi.org/10.1007/s10589-021-00342-y}.

\bibitem{SIREV:DeSimone_etal}
{\sc V.~De~Simone, D.~di~Serafino, J.~Gondzio, S.~Pougkakiotis, and M.~Viola},
  {\em {S}parse approximations with interior point methods}, SIAM Review, 64
  (2022), pp.~954--988, \url{https://doi.org/10.1137/21M1401103}.

\bibitem{SciComp:DengYin}
{\sc W.~Deng and W.~Yin}, {\em {O}n the global and linear convergence of the
  generalized alternating direction method of multipliers}, Journal of
  Scientific Computing, 66 (2016), pp.~889--916,
  \url{https://doi.org/10.1007/s10915-015-0048-x}.

\bibitem{MathProg:Dennis_etal}
{\sc J.~E. Dennis, S.-B.~B. Li, and R.~A. Tapia}, {\em {A} unified approach to
  global convergence of trust region methods for nonsmooth optimization},
  Mathematical Programming, 68 (1995), pp.~319--346,
  \url{https://doi.org/10.1007/BF01585770}.

\bibitem{IEEE_CDC:Dhingra_etal}
{\sc N.~K. Dhingra, S.~Z. Khong, and M.~R. Jovanović}, {\em A second order
  primal-dual algorithm for nonsmooth convex composite optimization}, in 2017
  IEEE 56th Annual Conference on Decision and Control (CDC), 2017,
  pp.~2868--2873, \url{https://doi.org/10.1109/CDC.2017.8264075}.

\bibitem{Springer:DonRock}
{\sc A.~L. Dontchev and R.~T. Rockafellar}, {\em {I}mplicit {F}unctions and
  {S}olution {M}appings}, Springer Series in Operations Research and Financial
  Engineering, Springer, New York, NY, 2014,
  \url{https://doi.org/10.1007/978-1-4939-1037-3}.

\bibitem{IFISSACM}
{\sc H.~C. Elman, A.~Ramage, and D.~J. Silvester}, {\em {A}lgorithm 866:
  {IFISS}, a {M}atlab toolbox for modelling incompressible flow}, ACM
  Transactions on Mathematical Software, 33 (2007), p.~14,
  \url{https://doi.org/10.1145/1236463.1236469}.

\bibitem{IFISSSIAMREVIEW}
{\sc H.~C. Elman, A.~Ramage, and D.~J. Silvester}, {\em {IFISS}: {A}
  computational laboratory for investigating incompressible flow problems},
  SIAM Review, 52 (2014), pp.~261--273,
  \url{https://doi.org/10.1137/120891393}.

\bibitem{IPMs:FountoulakisEtAl2013}
{\sc K.~Fountoulakis, J.~Gondzio, and P.~Zhlobich}, {\em Matrix-free interior
  point method for compressed sensing problems}, Mathematical Programming
  Computation, 6 (2014), pp.~1--31,
  \url{https://doi.org/10.1007/s12532-013-0063-6}.

\bibitem{MathProgComp:FriedOrban}
{\sc M.~P. Friedlander and D.~Orban}, {\em {A} primal-dual regularized
  interior-point method for convex quadratic progams}, Mathematical Programming
  Computation, 4 (2012), pp.~71--107,
  \url{https://doi.org/10.1007/s12532-012-0035-2}.

\bibitem{COAP:GillRobi}
{\sc P.~E. Gill and D.~P. Robinson}, {\em {A} primal--dual augmented
  {L}agrangian}, Computational Optimization and Applications, 15 (2012),
  pp.~1--25, \url{https://doi.org/10.1007/s10589-010-9339-1}.

\bibitem{arXiv:GondPougkPears}
{\sc J.~Gondzio, S.~Pougkakiotis, and J.~W. Pearson}, {\em General-purpose
  preconditioning for regularized interior point methods}, Computational
  Optimization and Applications, 83 (2022), pp.~727--757,
  \url{https://doi.org/10.1007/s10589-022-00424-5}.

\bibitem{MathOR:Han}
{\sc S.-P. Han, J.-S. Pang, and N.~Rangaraj}, {\em {G}lobally convergent
  {N}ewton methods for nonsmooth equations}, Mathematics of Operations
  Research, 17 (1992), pp.~586--607,
  \url{https://doi.org/10.1287/moor.17.3.586}.

\bibitem{InverseProbs:HansRaasch}
{\sc E.~Hans and T.~Raasch}, {\em {G}lobal convergence of damped semismooth
  {N}ewton methods for $\ell_1$ {T}ikhonov regularization}, Inverse Problems,
  31 (2015), p.~025005, \url{https://doi.org/10.1088/0266-5611/31/2/025005}.

\bibitem{MathProgComp:Hermans_etal}
{\sc B.~Hermans, A.~Themelis, and P.~Patrinos}, {\em {QPALM}: a proximal
  augmented {L}agrangian method for nonconvex quadratic programs}, Mathematical
  Programming Computation, 14 (2022), pp.~497--541,
  \url{https://doi.org/10.1007/s12532-022-00218-0}.

\bibitem{HestenesSteifelPCG}
{\sc M.~R. Hestenes and E.~Stiefel}, {\em {M}ethod of conjugate gradients for
  solving linear systems}, Journal of Research of the National Bureau of
  Standards, 49 (1952), pp.~409--436.

\bibitem{ApplMathOpt:HirStroNgu}
{\sc J.-B. Hiriart-Urruty, J.-J. Strodiot, and V.~H. Nguyen}, {\em
  {G}eneralized {H}essian matrix and second-order optimality conditions for
  problems with {$C^{1,1}$} data}, Applied Mathematics and Optimization, 11
  (1984), pp.~43--56, \url{https://doi.org/10.1007/BF01442169}.

\bibitem{MathProg:Itoetal}
{\sc K.~Ito and K.~Kunnisch}, {\em {O}n a semi-smooth {N}ewton method and its
  globalization}, Mathematical Programming, 118 (2009), pp.~347--370,
  \url{https://doi.org/10.1007/s10107-007-0196-3}.

\bibitem{SIAMOPT:Leeetal}
{\sc J.~D. Lee, Y.~Sun, and M.~A. Saunders}, {\em {P}roximal {N}ewton-type
  methods for minimizing composite functions}, SIAM Journal on Optimization, 24
  (2014), pp.~1420--1443, \url{https://doi.org/10.1137/130921428}.

\bibitem{SIAMOpt:Lietal2}
{\sc X.~Li, D.~Sun, and K.~C. Toh}, {\em {A} highly efficient semismooth
  {N}ewton augmented {L}agrangian method for solving {L}asso problems}, SIAM
  Journal on Optimization, 28 (2018), pp.~433--458,
  \url{https://doi.org/10.1137/16M1097572}.

\bibitem{SIAMOpt:Lietal}
{\sc X.~Li, D.~Sun, and K.~C. Toh}, {\em {A}n asymptotically superilinearly
  convergent semismooth {N}ewton augmented {L}agrangian method for linear
  programming}, SIAM Journal on Optimization, 30 (2020), pp.~2410--2440,
  \url{https://doi.org/10.1137/19M1251795}.

\bibitem{OptEng:MannelRund}
{\sc F.~Mannel and A.~Rund}, {\em {A} hybrid semismooth quasi-{N}ewton method
  for nonsmooth optimal control with {PDEs}}, Optimization and Engineering, 22
  (2021), pp.~2087--2125, \url{https://doi.org/10.1007/s11081-020-09523-w}.

\bibitem{CAM:MartQi}
{\sc J.~Martínez and L.~Qi}, {\em {I}nexact {N}ewton methods for solving
  nonsmooth equations}, Journal of Computational and Applied Mathematics, 60
  (1995), pp.~127--145, \url{https://doi.org/10.1016/0377-0427(94)00088-I}.

\bibitem{SIAMOpt:Mehrotra}
{\sc S.~Mehrotra}, {\em On the implementation of a primal-dual interior point
  method}, SIAM Journal on Optimization, 2 (1992), pp.~575--601,
  \url{https://doi.org/10.1137/0802028}.

\bibitem{BSMF:Moreau}
{\sc J.~J. Moreau}, {\em {P}roximité et dualité dans un espace {H}ilbertien},
  Bulletin de la Société Mathématique de France, 93 (1965), pp.~273--299,
  \url{https://doi.org/10.24033/bsmf.1625}.

\bibitem{PaigeSaundersSIAMNumAnal}
{\sc C.~C. Paige and M.~A. Saunders}, {\em {S}olution of sparse indefinite
  systems of linear equations}, SIAM Journal on Numerical Analysis, 12 (1975),
  pp.~617--629, \url{https://doi.org/10.1137/0712047}.

\bibitem{IEEE_DC:Patrinos_etal}
{\sc P.~Patrinos and A.~Bemporad}, {\em {P}roximal {N}ewton methods for convex
  composite optimization}, in 52nd IEEE Conference on Decision and Control,
  2013, pp.~2358--2363, \url{https://doi.org/10.1109/CDC.2013.6760233}.

\bibitem{NLAA:PearsonPorcStoll}
{\sc J.~W. Pearson, M.~Porcelli, and M.~Stoll}, {\em {I}nterior-point methods
  and preconditioning for {PDE}-constrained optimization problems involving
  sparsity terms}, Numerical Linear Algebra with Applications, 27 (2019),
  p.~e2276, \url{https://doi.org/10.1002/nla.2276}.

\bibitem{CAA:Porcellietal}
{\sc M.~Porcelli, V.~Simoncini, and M.~Stoll}, {\em {P}reconditioning
  {PDE}-constrained optimization with $l^1$-sparsity and control constraints},
  Computers \& Mathematics with Applications, 74 (2017), pp.~1059--1075,
  \url{https://doi.org/10.1016/j.camwa.2017.04.033}.

\bibitem{COAP:PougkGond}
{\sc S.~Pougkakiotis and J.~Gondzio}, {\em {A}n interior point-proximal method
  of multipliers for convex quadratic programming}, Computational Optimization
  and Applications, 78 (2021), pp.~307--351,
  \url{https://doi.org/10.1007/s10589-020-00240-9}.

\bibitem{SIAMX:Pougketal}
{\sc S.~Pougkakiotis, J.~W. Pearson, S.~Leveque, and J.~Gondzio}, {\em {F}ast
  solution methods for convex quadratic optimization of fractional differential
  equations}, SIAM Journal on Matrix Analysis and Applications, 41 (2020),
  pp.~1443--1476, \url{https://doi.org/10.1137/19M128288X}.

\bibitem{MathOR:Qi}
{\sc L.~Qi}, {\em {C}onvergence analysis of some algorithms for solving
  nonsmooth equations}, Mathematics of Operations Research, 18 (1993),
  pp.~227--244, \url{https://doi.org/10.1287/moor.18.1.227}.

\bibitem{RobinsonMathProgStud}
{\sc S.~M. Robinson}, {\em {S}ome continuity properties of polyhedral
  multifunctions}, in Mathematical Programming at Oberwolfach, H.~König,
  B.~Korte, and K.~Ritter, eds., vol.~14 of Mathematical Programming Studies,
  Springer, Berlin, Heidelberg, 1981, pp.~206--214,
  \url{https://doi.org/10.1007/BFb0120929}.

\bibitem{MathOpRes:Rock}
{\sc R.~T. Rockafellar}, {\em {{A}ugmented {L}agrangians and applications of
  the proximal point algorithm in convex programming}}, Mathematics of
  Operations Research, 1 (1976), pp.~97--116,
  \url{https://doi.org/doi.org/10.1287/moor.1.2.97}.

\bibitem{SIAMJCO:Rock}
{\sc R.~T. Rockafellar}, {\em {M}onotone operators and the proximal point
  algorithm}, SIAM Journal on Control and Optimization, 14 (1976),
  pp.~877--898, \url{https://doi.org/10.1137/0314056}.

\bibitem{Springer:RockWets}
{\sc R.~T. Rockafellar and R.~J.~B. Wets}, {\em {V}ariational {A}nalysis},
  vol.~317 of Grundlehren der mathematischen Wissenschaften, Springer-Verlag
  Berlin Heidelberg, 1998, \url{https://doi.org/10.1007/978-3-642-02431-3}.

\bibitem{COAP:SongChenYu}
{\sc X.~Song, B.~Chen, and B.~Yu}, {\em {A}n efficient duality-based approach
  for {PDE}-constrained sparse optimization}, Computational Optimization and
  Applications, 69 (2018), pp.~461--500,
  \url{https://doi.org/10.1007/s10589-017-9951-4}.

\bibitem{COAP:Stella_etal}
{\sc L.~Stella, A.~Themelis, and P.~Patrinos}, {\em {F}orward-backward
  quasi-{N}ewton methods for nonsmooth optimization problems}, Computational
  Optimization and Applications, 67 (2017), pp.~443--487,
  \url{https://doi.org/10.1007/s10589-017-9912-y}.

\bibitem{osqp}
{\sc B.~Stellato, G.~Banjac, P.~Goulart, A.~Bemporad, and S.~Boyd}, {\em
  {OSQP}: an operator splitting solver for quadratic programs}, Mathematical
  Programming Computation, 12 (2020), pp.~637--672,
  \url{https://doi.org/10.1007/s12532-020-00179-2},
  \url{https://doi.org/10.1007/s12532-020-00179-2}.

\bibitem{SIAMOpt:Themelis_etal}
{\sc A.~Themelis, L.~Stella, and P.~Patrinos}, {\em {F}orward-backward envelope
  for the sum of two nonconvex functions: {F}urther properties and nonmonotone
  linesearch algorithms}, SIAM Journal on Optimization, 28 (2018),
  pp.~2274--2303, \url{https://doi.org/10.1137/16M1080240}.

\bibitem{AMS:Trolzsch}
{\sc F.~Tröltzsch}, {\em {O}ptimal {C}ontrol of {P}artial {D}ifferential
  {E}quations: {T}heory, {M}ethods and {A}pplications}, vol.~112 of Graduate
  Studies in Mathematics, American Mathematical Society, 2010,
  \url{https://doi.org/10.1090/gsm/112}.

\bibitem{SIAMOpt:Vander}
{\sc R.~J. Vanderbei}, {\em {S}ymmetric quasidefinite matrices}, SIAM Journal
  on Optimization, 5 (1993), pp.~100--113,
  \url{https://doi.org/10.1137/0805005}.

\bibitem{bookVapnik}
{\sc V.~N. Vapnik}, {\em {S}tatistical {L}earning {T}heory}, John Wiley \&
  Sons, New York, 1998.

\bibitem{ESAIM:GerdDaniel}
{\sc G.~Wachsmuth and D.~Wachsmuth}, {\em {C}onvergence and regularization
  results for optimal control problems with sparsity functional}, ESAIM:
  Control, Optimisation and Calculus of Variations, 17 (2011), pp.~858--886,
  \url{https://doi.org/10.1051/cocv/2010027}.

\bibitem{IPMs:WaltzMoralNocedOrban}
{\sc R.~A. Waltz, J.~L. Morales, J.~Nocedal, and D.~Orban}, {\em An interior
  algorithm for nonlinear optimization that combines line search and trust
  region steps}, Mathematical Programming, 107 (2006), pp.~391--408,
  \url{https://doi.org/10.1007/s10107-004-0560-5}.

\bibitem{JRSS:Zou}
{\sc H.~Zou and T.~Hastie}, {\em {R}egularization and variable selection via
  the elastic net}, Journal of the Royal Statistical Society: Series B
  (Statistical Methodology), 67 (2005), pp.~301--320,
  \url{https://doi.org/10.1111/j.1467-9868.2005.00503.x}.

\end{thebibliography}
\bibliographystyle{siamplain}

\end{document}